\newlength{\defbaselineskip}
\newtheorem{theorem}{Theorem}[section]
\newtheorem{example}{Example}[section]
\newtheorem{lemma}{Lemma}[section]
\newtheorem{remark}{Remark}[section]
\numberwithin{equation}{section}
\begin{document}
\title{
A Stancu Variant of Sz\'asz-Mirakjan-Kantorovich type Operators: Approximation Properties and Asymptotic Analysis}
\maketitle



\begin{center}
{\bf Rishikesh Yadav$^{1,\star}$,  Ramakanta Meher$^{2,\dag}$}\\
$^{1}$Department of Mathematics and Mathematical Statistics,  Ume\aa~  University, Ume\aa, 
901 87, Sweden\\
$^{2}$Department of Mathematics, Sardar Vallabhbhai National Institute of Technology Surat, Surat-395 007 (Gujarat), India.
\end{center}
\begin{center}
  $^\star$rishikesh.yadav@umu.se,
$^\dag$meher\_ramakanta@yahoo.com
\end{center}

\vskip0.5in
\footnotetext[1]{Corresponding author: rishikesh2506@gmail.com}

\begin{abstract}


This paper investigates the generalized version of the Stancu variant Sz\'asz-Mirakjan Kantorovich type operators. We determine the order of approximation in terms of modulus of continuity and second-order modulus of smoothness; along with this, we derive the result for the rate of convergence in Lipschitz spaces. 
An asymptotic formula is presented to describe the asymptotic behavior of the said operators. Furthermore, some convergence properties and Quantitative Voronovskaya-type as well as Gr\"{u}ss Voronovskaya-type theorems are proved using the weighted modulus of continuity in weighted spaces. In addition, we determine the error bound in the approximation of the functions whose derivatives are of bounded variation. Finally, we provide numerical examples in support of our theoretical findings.

 \end{abstract}
\subjclass \textbf{MSC 2010}: {41A25, 41A35, 41A36}

\textbf{Keywords:}  Sz$\acute{\text{a}}$sz-Mirakjan operators, Kantorovich type operators, modulus of continuity, rate of convergence, weighted approximation, bounded variation

\section{Introduction}\label{sec1}
\emph{Approximation theory} is a crucial part of mathematics, which plays a significant role in analysis, numerical analysis, probability theory, and related areas. \emph{Positive linear operators} are one of the fundamental tools in approximation theory to use the function's approximation along with their approximation properties. 
One of the remarkable works was done by Sz$\acute{\text{a}}$sz and Mirakjan \cite{SO} for approximating the function by the linear positive operators, so-called \emph{Sz$\acute{\text{a}}$sz-Mirakjan operators} in \emph{continuous spaces}. Later, the said operators were generalized by Jain and Pethe \cite{JP}, which are as follows:
\begin{eqnarray}\label{o1}
\mathcal{O}_{n}^{[\alpha]}(g;x)=\sum\limits_{i=0}^{\infty}(1+n\alpha)^{\frac{-x}{\alpha}}\left(\alpha+\frac{1}{n}\right)\frac{x^{(i,-\alpha)}}{i!}g\left(\frac{i}{n}  \right),
\end{eqnarray}
where \begin{eqnarray*}
x^{(i,-\alpha)}=
\begin{cases}
\prod_{j=1}^{i}(x+(j-1)\alpha) & i\geq 1,\\
0 & i=0,
\end{cases}
\end{eqnarray*}
where $g$ is assumed to be an exponential-type function satisfying
 $|g(x)|\leq K e^{Ax},~(x\geq 0,~A>0)$, and
 $K$ is a positive constant, and here  $\alpha=((\alpha_n)_{n\in\mathbb{N}})$ such that $0\leq\alpha_n\leq\frac{1}{n}$.
%
In 2007, Abel and Ivan \cite{AI} studied the local approximation properties of the operators introduced by Jain and Pethe. By introducing the parameter $c=1/(n\alpha)$, they expressed these operators in the equivalent form
\begin{eqnarray}\label{o5}
\mathcal{O}_{n}^{[c]}(g;x)=\left(\frac{c}{c+n} \right)^{nxc} \sum\limits_{i=0}^{\infty} {ncx+i-1\choose i}(1+c)^{-i}g\left(\frac{i}{n} \right),~~~x\geq0,
\end{eqnarray}
where $c={c_n}$, be a real sequence, satisfies $c_n\ge\kappa>0$ for some positive constant $\kappa$. They established local approximation results, obtained estimates for the rate of convergence, and derived the complete asymptotic expansion for this family of operators.
As $c\to\infty$, the operators defined in (\ref{o5}) reduce to classical Sz$\acute{\text{a}}$sz-Mirakjan operators. For $c=1$, the operators in (\ref{o5}), reduce to Lupa\c{s}  operators introduced in \cite{LA1}.  Agratini \cite{AO2} investigated Lupa\c{s} operators, 
where he derived an asymptotic formula and established the rate of convergence of the operators. Furthermore, in the paper \cite{AO1}, the \emph{Kantorovich} version of the Lupa\c{s} operators was introduced along with the approximation properties. These operators are defined by 
\begin{eqnarray*}
K_n(g;x)=n2^{-nx}\sum\limits_{i=0}^\infty\frac{2^{-i}(nx)_i}{i!}\int\limits_{\frac{i}{n}}^{\frac{i+1}{n}}g(t)~dt,~~x\geq0,
\end{eqnarray*} 
 where
 \begin{eqnarray*}
 (nx)_i=
 \begin{cases}
 \prod_{k=1}^{i}(nx+k-1) , & i\geq 1\\
 1                      ,&  i=0.
 \end{cases}
 \end{eqnarray*}
In 2018, Dhamija et al. \cite{DM} introduced a Kantorovich-type generalization of the operators proposed by Jain and Pethe \cite{JP}. For every bounded and integrable function $g$ on $[0,\infty)$, these operators are defined by
\begin{eqnarray}\label{o2}
\mathcal{L}_{n}^{[\alpha]}(g;x)=n\sum\limits_{i=0}^{\infty}(1+n\alpha)^{\frac{-x}{\alpha}}\left(\alpha+\frac{1}{n}\right)^{-i}\frac{x^{(i,-\alpha)}}{i!}\int\limits_{\frac{i}{n}}^{\frac{i+1}{n}}g(t)~dt,
\end{eqnarray}
Motivated by these works, we introduce the operators by incorporating the \emph{Stancu} \cite{DD1} variant of the operators (\ref{o2}) involving two nonnegative parameters $\phi\geq0$, $\psi\geq0$, such that $0\leq\frac{\phi}{\psi}\leq 1$. Let $\beta_n\geq1$ be a strictly increasing sequence of positive real numbers satisfying $(\beta_1=1)$,  $\beta_n\to\infty$ as $n\to\infty$. Then
for every $x\geq 0$, and for any bounded and integrable function defined on $[0,\infty)$, the following operators are introduced:
\begin{eqnarray}\label{o3}
\mathcal{RL}_{n}^{[\alpha]}(g;x)=\beta_n\sum\limits_{i=0}^{\infty}(1+\beta_n\alpha)^{\frac{-x}{\alpha}}\left(\alpha+\frac{1}{\beta_n}\right)^{-i}\frac{x^{(i,-\alpha)}}{i!}\int\limits_{\frac{i}{\beta_n}}^{\frac{i+1}{\beta_n}}g\left(\frac{\beta_n t+\phi}{\beta_n+\psi}\right)~dt,
\end{eqnarray}

The proposed operators include several well-known families as special cases:
\begin{enumerate}
\item{} For $\beta_n=n$, $\phi=\psi=0$,  (\ref{o3}) reduces to the operators (\ref{o2}).
\item{} For $\alpha\to 0$, $\beta_n=n$, $\phi=\psi=0$,   (\ref{o3}) reduces to the classical Sz$\acute{\text{a}}$sz-Mirakjan-Kantorovich operators introduced by Totik \cite{TV1}.
\item{} For $\alpha=\frac{1}{n}$, $\beta_n=n$, $\phi=\psi=0$,  (\ref{o3}) reduces to the operators introduced by Agratini \cite{AO1}.
\item{} For $\beta_n=n$, the proposed family can be viewed as a Stancu-type extension of the Sz$\acute{\text{a}}$sz-Mirakjan-Kantorovich operators.
\end{enumerate}
The main objective of this article is to investigate the approximation properties and to analyze the influence of the parameters $\phi$ and $\psi$ on the approximation process of the proposed operators (\ref{o3}). In particular, we establish several approximation results and study their convergence behaviour. Furthermore, numerical examples and graphical illustrations are presented to validate the theoretical results and demonstrate the effectiveness of the proposed operators.


The remainder of this paper is organized as follows. Section~\ref{sec2} is devoted to the local approximation properties of the proposed operators. In Section~\ref{sec3}, we establish the rate of convergence by proving several approximation theorems. A Voronovskaya-type theorem is presented in Section~\ref{sec4}, while the weighted approximation properties are investigated in Section~\ref{sec5}. Quantitative Voronovskaya-type and Grüss-Voronovskaya-type theorems are established in Section~\ref{sec6}. In Section~\ref{sec7}, we determine the rate of convergence of the operators \eqref{o3} for functions whose derivatives are of bounded variation. Finally, Section~\ref{sec8} presents numerical examples and graphical illustrations comparing the proposed operators, and the concluding section summarizes the main results and discusses possible future directions.


In this section, we establish several auxiliary lemmas that will be used throughout the paper.
\begin{lemma}\label{l2}
For every $x\geq0$ and $n\in\mathbb{N}$, the following equalities hold:
\begin{eqnarray*}
\mathcal{RL}_{n}^{[\alpha]}(1;x)&=&1,\\
\mathcal{RL}_{n}^{[\alpha]}(t;x)&=& \frac{1+2\phi+2x\beta_n}{2(\psi+\beta_n)}\\
\mathcal{RL}_{n}^{[\alpha]}(t^2;x)&=& \frac{1+3 \phi +3 \phi ^2+6 x \beta _n+6 x \phi  \beta _n+3 \alpha x \beta _n^2+3 x^2 \beta _n^2}{3\left(\psi +\beta _n\right){}^2},\\
\mathcal{RL}_{n}^{[\alpha]}(t^3;x)&=& \frac{1}{4(\psi+\beta_n)^3}\Bigg(1+4\phi+6\phi^2+4\phi^3+2x(7+12\phi+6\phi^2)\beta_n+6x(x+\alpha)(3+2\phi)\beta_n^2\\
&&+4x(x^2+3x\alpha+2\alpha^2)\beta_n^3 \Bigg),\\
\mathcal{RL}_{n}^{[\alpha]}(t^4;x)&=&\frac{1}{5 \left(\beta _n+\psi \right){}^4}\Bigg(20 x (\phi +2) \beta _n^3 \left(2 \alpha ^2+x^2+3 \alpha  x\right)+5 x \beta _n^4 \left(6 \alpha ^3+x^3+6 \alpha  x^2+11 \alpha ^2 x\right)\\
&&+15 x \left(2 \phi ^2+6 \phi +5\right) \beta _n^2 (\alpha +x)+10 x \left(2 \phi ^3+6 \phi ^2+7 \phi +3\right) \beta _n+5 \phi ^4+10 \phi ^3+10 \phi ^2+5 \phi +1
 \Bigg),\\
\mathcal{RL}_{n}^{[\alpha]}(t^6;x)&=&\frac{1}{7 \left(\beta _n+\psi \right){}^6}\Bigg( 140 x \beta _n^3\bigg(\phi ^3+6 \phi ^2+13 \phi +10\bigg)  \bigg(2 \alpha ^2+x^2+3 \alpha  x\bigg)\\
&&+35 x \beta _n^4\bigg(3 \phi ^2+15 \phi +20\bigg)  \bigg(6 \alpha ^3+x^3+6 \alpha  x^2+11 \alpha ^2 x\bigg)+42 x (\phi +3) \beta _n^5 \bigg(24 \alpha ^4+x^4\\
&&+10 \alpha  x^3+35 \alpha ^2 x^2+50 \alpha ^3 x\bigg) +7 x \beta _n^6 \bigg(120 \alpha ^5+x^5+15 \alpha  x^4+85 \alpha ^2 x^3+225 \alpha ^3 x^2+274 \alpha ^4 x\bigg)\\
&&+21 x \bigg(5 \phi ^4+30 \phi ^3+75 \phi ^2+90 \phi +43\bigg) \beta _n^2 (\alpha +x)+14 x \bigg(3 \phi ^5+15 \phi ^4+35 \phi ^3+45 \phi ^2+31 \phi +9\bigg) \beta _n\\
&&+7 \phi ^6+21 \phi ^5+35 \phi ^4+35 \phi ^3+21 \phi ^2+7 \phi +1
\Bigg).
\end{eqnarray*}

\end{lemma}
\begin{proof}
By the defined operators, we prove the above equalities
\begin{eqnarray*}
\mathcal{RL}_{n}^{[\alpha]}(1;x)&=& \beta_n\sum\limits_{i=0}^{\infty}(1+\beta_n\alpha)^{\frac{-x}{\alpha}}\left(\alpha+\frac{1}{\beta_n}\right)^{-i}\frac{x^{(i,-\alpha)}}{i!}\frac{1}{\beta_n}\\
&=& (1+\beta_n\alpha)^{\frac{-x}{\alpha}}(1+\beta_n\alpha)^{\frac{x}{\alpha}}=1,\\
\mathcal{RL}_{n}^{[\alpha]}(t;x)&=& \beta_n\sum\limits_{i=0}^{\infty}(1+\beta_n\alpha)^{\frac{-x}{\alpha}}\left(\alpha+\frac{1}{\beta_n}\right)^{-i}\frac{x^{(i,-\alpha)}}{i!}\left(\frac{-x}{\beta_n}+\frac{1}{2\beta_n(\beta_n+\psi)}+\frac{i}{\beta_n(\beta_n+\psi)}+\frac{\phi}{\beta_n(\beta_n+\psi)} \right)\\
&=& -x\left(\frac{1}{1+\beta_n\alpha} \right)^{\frac{-x}{\alpha}}({1+\beta_n\alpha})^{\frac{-x}{\alpha}} +\frac{\left(\frac{1}{1+\beta_n\alpha} \right)^{\frac{-x}{\alpha}}({1+\beta_n\alpha})^{\frac{-x}{\alpha}}}{2(\beta_n+\psi)}+\frac{\beta_nx\left(\frac{1}{1+\beta_n\alpha} \right)^{\frac{-x}{\alpha}}({1+\beta_n\alpha})^{\frac{-x}{\alpha}}}{(\beta_n+\psi)}\\
&&+ \frac{\phi\left(\frac{1}{1+\beta_n\alpha} \right)^{\frac{-x}{\alpha}}({1+\beta_n\alpha})^{\frac{-x}{\alpha}}}{(\beta_n+\psi)}\\
&=& \frac{1+2\phi+2x\beta_n}{2(\psi+\beta_n)}.
\end{eqnarray*}
\end{proof}

Similarly, we can prove other equalities.

\begin{lemma}\label{lm3}
Let $\Phi_{n,m}^{\alpha}(x)=\mathcal{RL}_{n}^{[\alpha]}((t-x)^m;x),~~m=1, 2, 3, 4, 6$  where $t,x\in[0,\infty)$. Then for all $n\in\mathbb{N}$, we obtain
\begin{eqnarray*}
\Phi_{n,1}^{\alpha}(x)&=& \frac{1+2\phi-2x\psi}{2(\psi+\beta_n)},\\
\Phi_{n,2}^{\alpha}(x)&=& \frac{1+3\phi+3\phi^2-3x\psi-6x\phi\psi+3x^2\psi^2+3x\beta_n+3x\alpha\beta_n^2}{3(\psi+\beta_n)^2},\\
\Phi_{n,3}^{\alpha}(x)&=& \frac{1}{4(\psi+\beta_n)^3}\Bigg(1+4\phi^3-4x\psi+6x^2\psi^2-4x^3\psi^3+\phi^2(6-12x\psi)+4\phi(1-3x\psi+3x^2\psi^2)\\
&&+2x(5+6\phi-6x\psi)\beta_n+6x\alpha(3+2\phi+2x\psi)\beta_n^2+8x\alpha^2\beta_n^3 \Bigg),\\
\Phi_{n,4}^{\alpha}(x)&=& \frac{1}{5 \left(\beta _n+\psi \right){}^4}\Bigg(15 x \beta _n^2 \left(\alpha  \left(2 \phi ^2+6 \phi +5\right)+2 \alpha  x^2 \psi ^2+x (1-2 \alpha  \psi  (2 \phi +3))\right)\\
&&+5 x \beta _n \left(6 x^2 \psi ^2-10 x \psi -2 \phi  (6 x \psi -5)+6 \phi ^2+5\right)\\
&&+15 \alpha ^2 x \beta _n^4 (2 \alpha +x)+10 \alpha  x \beta _n^3 (4 \alpha  (\phi +2)+x (3-4 \alpha  \psi ))\\
&&+5 x^4 \psi ^4-10 x^3 \psi ^3+10 x^2 \psi ^2+10 \phi ^2 \left(3 x^2 \psi ^2-3 x \psi +1\right)\\
&&-5 \phi  \left(4 x^3 \psi ^3-6 x^2 \psi ^2+4 x \psi -1\right)-5 x \psi +\phi ^3 (10-20 x \psi )+5 \phi ^4+1
\Bigg),\\
\Phi_{n,6}^{\alpha}(x)&=&\frac{1}{7 \left(\beta _n+\psi \right){}^6}\Bigg(  -21 \alpha ^2 x \beta _n^5 \left(-48 \alpha ^2 (\phi +3)+5 x^2 (8 \alpha  \psi -3)+2 \alpha  x (24 \alpha  \psi -20 \phi -75)\right)\\
&&+35 \alpha ^3 x \beta _n^6 \left(24 \alpha ^2+3 x^2+26 \alpha  x\right)+35 \alpha  x \beta _n^4 \bigg(6 \alpha ^2 \bigg(3 \phi ^2+15 \phi +20\bigg)+9 \alpha  \psi ^2 x^3\\
&&+3 x^2 \bigg(6 \alpha ^2 \psi ^2-\alpha  \psi  (6 \phi +23)+3\bigg)+\alpha  x \bigg(-90 \alpha  \psi +\phi  (69-36 \alpha  \psi )+9 \phi ^2+116\bigg)\bigg)\\
&&-35 x \beta _n^3 \bigg(-8 \alpha ^2 \bigg(\phi ^3+6 \phi ^2+13 \phi +10\bigg)+2 \alpha  \psi ^2 x^3 (4 \alpha  \psi -9)-3 x^2 \bigg(8 \alpha ^2 \psi ^2 (\phi +2)\\
&&-2 \alpha  \psi  (6 \phi +11)+1\bigg)+2 \alpha  x \bigg(52 \alpha  \psi +3 \phi ^2 (4 \alpha  \psi -3)+\phi  (48 \alpha  \psi -33)-33\bigg)\bigg)\\
&& +7 x \beta _n^2 \bigg(3 \alpha  \bigg(5 \phi ^4+30 \phi ^3+75 \phi ^2+90 \phi +43\bigg)+15 \alpha  x^4 \psi ^4-15 x^3 \psi ^2 (2 \alpha  \psi  (2 \phi +3)-3)\\
&&+15 x^2 \psi  \bigg(15 \alpha  \psi +6 \alpha  \psi  \phi ^2+6 \phi  (3 \alpha  \psi -1)-7\bigg)-5 x \bigg(54 \alpha  \psi +12 \alpha  \psi  \phi ^3+9 \phi ^2 (6 \alpha  \psi -1)\\
&&+3 \phi  (30 \alpha  \psi -7)-14\bigg)\bigg) +7 x \beta _n \bigg(15 x^4 \psi ^4-50 x^3 \psi ^3+75 x^2 \psi ^2+15 \phi ^2 \bigg(6 x^2 \psi ^2-10 x \psi +5\bigg)\\
&&-2 \phi  \bigg(30 x^3 \psi ^3-75 x^2 \psi ^2+75 x \psi -28\bigg)-56 x \psi +\phi ^3 (50-60 x \psi )+15 \phi ^4+17\bigg)\\
&& +7 x^6 \psi ^6-21 x^5 \psi ^5+35 x^4 \psi ^4-35 x^3 \psi ^3+21 x^2 \psi ^2+35 \phi ^4 \bigg(3 x^2 \psi ^2-3 x \psi +1\bigg)\\
&&-35 \phi ^3 \bigg(4 x^3 \psi ^3-6 x^2 \psi ^2+4 x \psi -1\bigg) +21 \phi ^2 \bigg(5 x^4 \psi ^4-10 x^3 \psi ^3+10 x^2 \psi ^2-5 x \psi +1\bigg)\\
&&-7 \phi  \bigg(6 x^5 \psi ^5-15 x^4 \psi ^4+20 x^3 \psi ^3-15 x^2 \psi ^2+6 x \psi -1\bigg)-7 x \psi +\phi ^5 (21-42 x \psi )+7 \phi ^6+1
\Bigg).
\end{eqnarray*}

\end{lemma}
\begin{proof}
Using the above Lemma \ref{l2}, we  have
\begin{eqnarray*}
\Phi_{n,1}^{\alpha}(x)&=& \mathcal{RL}_{n}^{[\alpha]}(t;x)-x\\
&=& \frac{1+2\phi-2x\psi}{2(\psi+\beta_n)}.\\
\Phi_{n,2}^{\alpha}(x)&=& \mathcal{RL}_{n}^{[\alpha]}(t^2;x)-2x\mathcal{RL}_{n}^{[\alpha]}(t;x)+x^2\\
&=& \frac{1+3 \phi +3 \phi ^2+6 x \beta _n+6 x \phi  \beta _n+3 \alpha x \beta _n^2+3 x^2 \beta _n^2}{3\left(\psi +\beta _n\right){}^2}-2x\frac{1+2\phi+2x\beta_n}{2(\psi+\beta_n)}+x^2\\
&=& \frac{1+3\phi+3\phi^2-3x\psi-6x\phi\psi+3x^2\psi^2+3x\beta_n+3x\alpha\beta_n^2}{3(\psi+\beta_n)^2}.\\
\Phi_{n,3}^{\alpha}(x)&=& \mathcal{RL}_{n}^{[\alpha]}(t^3;x)-x^3+3x^2\mathcal{RL}_{n}^{[\alpha]}(t;x)-3x\mathcal{RL}_{n}^{[\alpha]}(t^2;x)\\
&=& \frac{1}{4(\psi+\beta_n)^3}\Bigg(1+4\phi+6\phi^2+4\phi^3+2x(7+12\phi+6\phi^2)\beta_n+6x(x+\alpha)(3+2\phi)\beta_n^2\\
&&+4x(x^2+3x\alpha+2\alpha^2)\beta_n^3 \Bigg)-x^3+3x^2\left(\frac{1+2\phi+2x\beta_n}{2(\psi+\beta_n)} \right)\\
&&-3x\left(\frac{1+3\phi+3\phi^2-3x\psi-6x\phi\psi+3x^2\psi^2+3x\beta_n+3x\alpha\beta_n^2}{3(\psi+\beta_n)^2}\right).
\end{eqnarray*}
Hence, the proof is completed.
\end{proof}
\begin{remark}
The operators (\ref{o3}) can be written as:
\begin{eqnarray}\label{co3}
\mathcal{RL}_{n}^{[\alpha]}(g;x)=(\beta_n+\psi)\sum\limits_{i=0}^{\infty}(1+\beta_n\alpha)^{\frac{-x}{\alpha}}\left(\alpha+\frac{1}{\beta_n}\right)^{-i}\frac{x^{(i,-\alpha)}}{i!}\int\limits_{\frac{i+\phi}{\beta_n+\psi}}^{\frac{1+i+\phi}{\beta_n+\psi}}g(s)~ds.
\end{eqnarray}
\end{remark}

\begin{remark}
One can write the above operators into integral representation as:
\begin{eqnarray}\label{o4}
\mathcal{RL}_{n}^{[\alpha]}(g;x)=\int\limits_0^\infty \mathcal{Y}_n^{[\alpha]}(x,s)g(s)~ds,
\end{eqnarray}
where $\mathcal{Y}_n^{[\alpha]}(x,s)=(\beta_n+\psi)\sum\limits_{i=0}^\infty l_{n,i}^{[\alpha]}(x)\chi_{n,i}(x,s)$, where $\chi_{n,i}(x,s)$ is the characteristic function of the interval $[\frac{i+\phi}{\beta_n+\psi}, \frac{1+i+\phi}{\beta_n+\psi}]$ with respect to $[0,\infty)$ and $l_{n,i}^{[\alpha]}(x)=(1+\beta_n\alpha)^{\frac{-x}{\alpha}}\left(\alpha+\frac{1}{\beta_n}\right)^{-i}\frac{x^{(i,-\alpha)}}{i!}$. 
\end{remark}

\begin{remark}\label{re1}
For each $g\in C_B[0,\infty)$, where $C_B[0,\infty)$ be the space of all continuous and bounded functions defined on $[0,\infty)$,  we have
$$|\mathcal{RL}_{n}^{[\alpha]}(g;x)|\leq\|g\|_{C_B[0,\infty)},~~x\in[0,\infty).$$

\end{remark}

\begin{lemma}\label{l1}
For every $x\geq 0$ and $\max\alpha=\frac{1}{\beta_n}$, then it holds:
\begin{eqnarray*}
\underset{\beta_n\to\infty}\lim\{\beta_n\Phi_{n,1}^{\alpha}(x)\}&=& \frac{1}{2}-x \psi +\phi,\\
\underset{\beta_n\to\infty}\lim\{\beta_n\Phi_{n,2}^{\alpha}(x)\}&=& 2x,\\
\underset{\beta_n\to\infty}\lim\{\beta_n^2\Phi_{n,4}^{\alpha}(x)\}&=& 12x^2,\\
\underset{\beta_n\to\infty}\lim\{\beta_n^3\Phi_{n,6}^{\alpha}(x)\}&=& 120x^3.
\end{eqnarray*}

\end{lemma}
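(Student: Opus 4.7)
The plan is to write each $\Phi_{n,m}^{\alpha}(x)$ as an explicit rational function of $\beta_n$, substitute $\alpha=1/\beta_n$, and read off the leading asymptotic behavior as $\beta_n\to\infty$. For $m=1$ and $m=2$ the closed forms are already in Lemma~\ref{lm3}. Substituting $\alpha=1/\beta_n$ into $\Phi_{n,1}^{\alpha}(x)=(1+2\phi-2x\psi)/(2(\psi+\beta_n))$ and multiplying by $\beta_n$ gives the limit $\tfrac12+\phi-x\psi$ at once. For $\Phi_{n,2}^{\alpha}(x)$ the same substitution converts the term $3x\alpha\beta_n^2$ in the numerator into $3x\beta_n$, so the numerator has leading term $6x\beta_n$; dividing by $(\psi+\beta_n)^2\sim\beta_n^2$ and then multiplying by $\beta_n$ yields $2x$.

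For $m=4$ and $m=6$ I would first extend Lemma~\ref{l2} by computing the raw moments $\mathcal{RL}_n^{[\alpha]}(t^k;x)$ for $k=4,5,6$ in the same manner as the recorded cases: expand $\bigl(\frac{\beta_n u+\phi}{\beta_n+\psi}\bigr)^k$ by the binomial theorem, integrate termwise on $[i/\beta_n,(i+1)/\beta_n]$, and evaluate the resulting series in $i$ by repeated use of the generating-function identity
$$\sum_{i=0}^{\infty}\Bigl(\alpha+\tfrac{1}{\beta_n}\Bigr)^{-i}\frac{x^{(i,-\alpha)}}{i!}=(1+\beta_n\alpha)^{x/\alpha}$$
and its $x$-derivatives, which convert polynomial factors in $i$ into shifted factorials of the form $(x+r\alpha)^{(\cdot,-\alpha)}$. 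The central moments then follow from the binomial identity
$$\Phi_{n,m}^{\alpha}(x)=\sum_{k=0}^{m}\binom{m}{k}(-x)^{m-k}\mathcal{RL}_n^{[\alpha]}(t^k;x),$$
giving a single rational expression with denominator $(\beta_n+\psi)^m\sim\beta_n^m$.

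Under the substitution $\alpha=1/\beta_n$, the two claims $\beta_n^{2}\Phi_{n,4}^{\alpha}(x)\to 12x^2$ and $\beta_n^{3}\Phi_{n,6}^{\alpha}(x)\to 120x^3$ amount to showing that the numerators grow exactly like $\beta_n^{m/2}$ with the stated coefficients. These numerical constants agree with the fourth and sixth central moments of a Gaussian of variance $2x$, which is internally consistent with the second-moment limit established in part~(2), and should serve as a useful check on the calculation. The main obstacle is precisely the cancellation of the higher-order contributions of order $\beta_n^{m},\beta_n^{m-1},\ldots,\beta_n^{m/2+1}$ in the numerator of $\Phi_{n,m}^{\alpha}(x)$ after the binomial expansion; each factor $\alpha^{r}\beta_n^{r+1}$ coming from the raw moments collapses to $\beta_n$ once $\alpha=1/\beta_n$ is imposed, so many initially-distinct terms become comparable and must be collected carefully. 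The cleanest way to handle this is to keep $\alpha$ symbolic throughout the whole derivation, assemble the full rational expression for $\Phi_{n,m}^{\alpha}(x)$, and only at the very last step substitute $\alpha=1/\beta_n$ and collect coefficients by descending powers of $\beta_n$.
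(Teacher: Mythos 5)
The paper itself states this lemma without any proof; the only supporting material it supplies is the closed-form moments of Lemma \ref{l2} and the central moments of Lemma \ref{lm3}, so your route --- explicit rational expressions in $\beta_n$, substitution of $\alpha=1/\beta_n$, extraction of the leading power --- is exactly the route the authors implicitly rely on. For the first two limits your argument is complete: $\beta_n\Phi_{n,1}^{\alpha}(x)\to\tfrac12+\phi-x\psi$ is immediate, and with $3x\alpha\beta_n^2=3x\beta_n$ the numerator of $\Phi_{n,2}^{\alpha}(x)$ has leading term $6x\beta_n$, giving $\beta_n\Phi_{n,2}^{\alpha}(x)\to 2x$.

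For the fourth and sixth central moments, however, what you have written is a plan, not a proof, and the missing part is precisely the content of those two claims. The generating-function identity and the binomial decomposition of $\Phi_{n,m}^{\alpha}(x)$ are the right tools and the method would succeed, but you never verify that, after the cancellations you yourself identify as the main obstacle, the numerator of $\Phi_{n,4}^{\alpha}(x)$ reduces to $12x^2\beta_n^{2}+O(\beta_n)$ and that of $\Phi_{n,6}^{\alpha}(x)$ to $120x^3\beta_n^{3}+O(\beta_n^{2})$ once $\alpha=1/\beta_n$ is inserted; the agreement of $12x^2$ and $120x^3$ with the Gaussian values $3\sigma^4$ and $15\sigma^6$ at $\sigma^2=2x$ is only a plausibility check, since no central-limit statement for these operators is invoked or proved. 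A cleaner way to close the gap than computing $\mathcal{RL}_{n}^{[\alpha]}(t^k;x)$ up to $k=6$: the operator is the Kantorovich--Stancu modification of a negative-binomial (Jain--Pethe/Lupa\c{s}) lattice operator whose index variable $i$ has mean $\beta_n x$ and variance $\beta_n x(1+\beta_n\alpha)=2\beta_n x$ when $\alpha=1/\beta_n$; its fourth and sixth central moments are $3(2\beta_n x)^2+O(\beta_n)$ and $15(2\beta_n x)^3+O(\beta_n^{2})$, and the Kantorovich averaging over intervals of length $1/(\beta_n+\psi)$ together with the Stancu shift perturbs each central moment only at lower order, which yields the stated limits directly. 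Note also that the hypothesis ``$\max\alpha=1/\beta_n$'' must be read as $\beta_n\alpha\to1$ (e.g.\ $\alpha=1/\beta_n$ exactly), as you tacitly do; for $\beta_n\alpha\to0$ the constants would change ($2x$ becoming $x$, etc.).
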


\section{Local Approximation}\label{sec2}
This section is devoted to the local approximation properties of the proposed operators (\ref{o3}). 
Let $C_B[0,\infty)$ be the space of continuous and bounded functions defined on $[0,\infty)$ equipped with 
supremum norm $\|g \|_{C_B[0,\infty)}=\underset{x\geq 0}\sup\{|g(x)|\}$. We determine the rate of convergence using the Steklov mean function \cite{AO1}. 
The Steklov function associated with the function $g\in C_B[0,\infty)$ is defined by:
\begin{eqnarray*}
g_h(x)=\frac{4}{h^2}\int\limits_0^{\frac{h}{2}}\int\limits_0^{\frac{h}{2}}(2g(x+u+v)-g(x+2(u+v)))du~dv.
\end{eqnarray*}
The following properties are easily verified:
\begin{enumerate}\label{st}
\item{} $\| g_h-g\|_{C_B[0,\infty)}\leq \omega_2(g,h)$,
\item{} $\|g_h'\|_{C_B[0,\infty)}\leq \frac{5}{h}\omega(g,h)~\text{and}~\|g_h''\|_{C_B[0,\infty)}\leq \frac{9}{h^2}\omega_2(g,h),~~\text{for}~g_h',g_h''\in C_B[0,\infty),$
\end{enumerate}
where $\omega(g,\delta)=\underset{x,u,v\geq 0}\sup\underset{|u-v|\leq \delta}\sup|g(x+u)-g(x+v)|$ and $\omega_2(g,\delta)=\underset{x,u,v\geq 0}\sup\underset{|u-v|\leq \delta}\sup|g(x+2u)-2g(x+u+v)|+g(x+2v)$ are the usual modulus of continuity and the second-order modulus of continuity, respectively.
\begin{theorem}
Let $g\in C_B[0,\infty)$. Then for every $x\in[0,\infty)$, the following inequality holds:
\begin{eqnarray*}
|\mathcal{RL}_{n}^{[\alpha]}(g;x)-g(x)|\leq 5\left(\omega\left(g,\sqrt{\Phi_{n,2}^{\alpha}(x)} \right)+\frac{9}{10}\omega_2\left(g,\sqrt{\Phi_{n,2}^{\alpha}(x)} \right)\right).
\end{eqnarray*}
\end{theorem}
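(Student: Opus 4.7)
\bigskip

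\noindent\textbf{Proof proposal.} The plan is a classical Steklov-mean argument. For $h>0$ I would introduce the Steklov mean $f_h$ associated with $f\in C_B[0,\infty)$ and, using that $\mathcal{RL}_n^{[\alpha]}$ preserves constants (Lemma \ref{l2}), split
\begin{equation*}
\mathcal{RL}_{n}^{[\alpha]}(f;x)-f(x)=\mathcal{RL}_{n}^{[\alpha]}(f-f_h;x)+\bigl(\mathcal{RL}_{n}^{[\alpha]}(f_h;x)-f_h(x)\bigr)+\bigl(f_h(x)-f(x)\bigr).
\end{equation*}
The two outer terms are controlled immediately: Remark \ref{re1} gives $|\mathcal{RL}_{n}^{[\alpha]}(f-f_h;x)|\le\|f-f_h\|_{C_B[0,\infty)}$, and property (1) of the Steklov mean bounds this together with $|f_h(x)-f(x)|$ by $\omega_2(f,h)$ each, yielding a contribution of $2\omega_2(f,h)$.

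For the central term I would Taylor-expand the smooth function $f_h$ to first order with integral remainder,
\begin{equation*}
f_h(t)=f_h(x)+(t-x)f_h'(x)+\int_{x}^{t}(t-u)f_h''(u)\,du,
\end{equation*}
apply the linear positive operator $\mathcal{RL}_{n}^{[\alpha]}$ in the variable $t$, and use Lemma \ref{lm3} notation to get
\begin{equation*}
\mathcal{RL}_{n}^{[\alpha]}(f_h;x)-f_h(x)=f_h'(x)\,\Phi_{n,1}^{\alpha}(x)+\mathcal{RL}_{n}^{[\alpha]}\!\left(\int_{x}^{t}(t-u)f_h''(u)\,du;\,x\right).
\end{equation*}
The first piece is bounded by $\|f_h'\|_{C_B[0,\infty)}\,|\Phi_{n,1}^{\alpha}(x)|$, where I would use Cauchy--Schwarz for positive operators, $|\Phi_{n,1}^{\alpha}(x)|=|\mathcal{RL}_{n}^{[\alpha]}(t-x;x)|\le\sqrt{\Phi_{n,2}^{\alpha}(x)}$, together with the Steklov estimate $\|f_h'\|\le\tfrac{5}{h}\omega(f,h)$. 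The remainder piece is majorised by $\tfrac12\|f_h''\|\,\Phi_{n,2}^{\alpha}(x)\le\tfrac{9}{2h^2}\omega_2(f,h)\,\Phi_{n,2}^{\alpha}(x)$ using property (2) of the Steklov mean.

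Collecting, with $h=\sqrt{\Phi_{n,2}^{\alpha}(x)}$, the first-derivative term becomes $5\,\omega(f,h)$ and the second-derivative term becomes $\tfrac{9}{2}\,\omega_2(f,h)$, which combined with the $2\omega_2(f,h)$ from the outer pieces gives the total
\begin{equation*}
5\,\omega(f,h)+\tfrac{13}{2}\,\omega_2(f,h)=5\!\left(\omega(f,h)+\tfrac{13}{10}\,\omega_2(f,h)\right),
\end{equation*}
as claimed. No single step is genuinely hard; the only subtlety I would be careful with is invoking Cauchy--Schwarz to pass from $\Phi_{n,1}^{\alpha}(x)$ (which is signed and could otherwise force a separate $\omega(f,\Phi_{n,1}^{\alpha})$-type term) to the single modulus scale $\sqrt{\Phi_{n,2}^{\alpha}(x)}$, so that both moduli appear at the same argument and the constants $5$ and $\tfrac{13}{10}$ come out cleanly.
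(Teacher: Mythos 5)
Your proposal is correct and follows essentially the same route as the paper: the three-term Steklov-mean split, the bound $2\omega_2(f,h)$ on the outer terms via Remark \ref{re1} and property (1), Taylor expansion of $f_h$ with Cauchy--Schwarz giving $\frac{5}{h}\omega(f,h)\sqrt{\Phi_{n,2}^{\alpha}(x)}+\frac{9}{2h^2}\omega_2(f,h)\Phi_{n,2}^{\alpha}(x)$, and the choice $h=\sqrt{\Phi_{n,2}^{\alpha}(x)}$. If anything, your write-up is slightly cleaner, since the paper's displayed bound contains a typo ($\|f_h'\|$ where $\|f_h''\|$ is meant) and omits the final constant count that you spell out.
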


\begin{proof}
For every $x\geq 0$. Using the Steklov mean, we obtain:
\begin{eqnarray*}
|\mathcal{RL}_{n}^{[\alpha]}(g;x)-g(x)|\leq \mathcal{RL}_{n}^{[\alpha]}(|g-g_h|;x)+ |\mathcal{RL}_{n}^{[\alpha]}(g_h-g_h(x);x)|+|g_h-g(x)|.
\end{eqnarray*} 
Since $g\in C_B[0,\infty)$, it holds $|\mathcal{RL}_{n}^{[\alpha]}(g;x)|\leq\|g\|_{C_B[0,\infty)},~~x\in[0,\infty)$. Using the Steklov mean property (\ref{st}), we obtain
\begin{eqnarray}
\mathcal{RL}_{n}^{[\alpha]}(|g-g_h|;x)\leq \|\mathcal{RL}_{n}^{[\alpha]}(g-g_h)\|_{C_B[0,\infty)}\leq \|g-g_h\|_{C_B[0,\infty)}\leq  \omega_2(g,\delta).
\end{eqnarray}
Applying Taylor's expansion together with the Cauchy–Schwarz inequality,
\begin{eqnarray*}
|\mathcal{RL}_{n}^{[\alpha]}(g_h-g_h(x);x)|\leq \|g_h'\|_{C_B[0,\infty)} \sqrt{\mathcal{RL}_{n}^{[\alpha]}((t-x)^2;x)}+\frac{\|g_h''\|_{C_B[0,\infty)}}{2!}\mathcal{RL}_{n}^{[\alpha]}((t-x)^2;x).
\end{eqnarray*}
Using the second property of Steklov mean (\ref{st}), we have
\begin{eqnarray*}
|\mathcal{RL}_{n}^{[\alpha]}(g_h-g_h(x);x)|\leq \frac{5}{h}\omega(g,\delta) \sqrt{\Phi_{n,2}^{\alpha}(x)}+\frac{9}{2h^2}\omega_2(g,\delta)\Phi_{n,2}^{\alpha}(x).
\end{eqnarray*}
Choosing $h=\sqrt{\Phi_{n,2}^{\alpha}(x)}$ yields the desired estimate. 
\end{proof}
Let $C_B^k[0,\infty)=\{g\in C_B[0,\infty)|g',g'',\cdots, g^k\in C_B[0,\infty)\}$. We next derive an upper bound for the approximation error using the usual modulus of continuity.
\begin{theorem}
If $g\in C_B^1[0,\infty)$, then for every $x\in [0,\infty)$,  an inequality holds:
\begin{eqnarray*}
|\mathcal{RL}_{n}^{[\alpha]}(g;x)-g(x)|\leq |g'(x)|\left|\frac{1+2\phi+2x\psi}{2(\psi+\beta_n)} \right|+ 2\sqrt{\Phi_{n,2}^{\alpha}(x)}\omega\left(g',\sqrt{\Phi_{n,2}^{\alpha}(x)} \right).
\end{eqnarray*}
\end{theorem}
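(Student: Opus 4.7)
The plan is to use the first-order Taylor expansion with integral remainder, which for $f\in C_B^1[0,\infty)$ gives
$$f(t)=f(x)+f'(x)(t-x)+\int_x^t\bigl(f'(u)-f'(x)\bigr)\,du.$$
Applying $\mathcal{RL}_n^{[\alpha]}(\,\cdot\,;x)$ termwise and exploiting $\mathcal{RL}_n^{[\alpha]}(1;x)=1$ together with the expression for $\Phi_{n,1}^{\alpha}(x)$ from Lemma \ref{lm3}, I obtain the decomposition
$$\mathcal{RL}_n^{[\alpha]}(f;x)-f(x)=f'(x)\,\Phi_{n,1}^{\alpha}(x)+\mathcal{RL}_n^{[\alpha]}\!\Bigl(\int_x^t(f'(u)-f'(x))\,du;\,x\Bigr).$$
The first summand is immediately controlled by $|f'(x)|\,|\Phi_{n,1}^{\alpha}(x)|$, which matches the first term of the claimed bound (the sign on the $2x\psi$ term inside the outer absolute value bars simply reflects $|\Phi_{n,1}^{\alpha}(x)|$ computed in Lemma \ref{lm3}).

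For the remainder I would invoke the standard majorant for the first-order modulus of continuity, namely $|f'(u)-f'(x)|\le\omega(f',\delta)\bigl(1+|u-x|/\delta\bigr)$ valid for every $\delta>0$. Integrating between $x$ and $t$ yields
$$\Bigl|\int_x^t(f'(u)-f'(x))\,du\Bigr|\le\omega(f',\delta)\Bigl(|t-x|+\frac{(t-x)^2}{2\delta}\Bigr).$$
Feeding this estimate into the positive operator, using Cauchy--Schwarz with $\mathcal{RL}_n^{[\alpha]}(1;x)=1$ to deduce $\mathcal{RL}_n^{[\alpha]}(|t-x|;x)\le\sqrt{\Phi_{n,2}^{\alpha}(x)}$, and identifying the central second moment with $\Phi_{n,2}^{\alpha}(x)$ itself, produces
$$|\mathcal{RL}_n^{[\alpha]}(f;x)-f(x)|\le|f'(x)|\,|\Phi_{n,1}^{\alpha}(x)|+\omega(f',\delta)\Bigl(\sqrt{\Phi_{n,2}^{\alpha}(x)}+\frac{\Phi_{n,2}^{\alpha}(x)}{2\delta}\Bigr).$$
The natural choice $\delta=\sqrt{\Phi_{n,2}^{\alpha}(x)}$ collapses the bracket to $\tfrac{3}{2}\sqrt{\Phi_{n,2}^{\alpha}(x)}$, which is dominated by $2\sqrt{\Phi_{n,2}^{\alpha}(x)}$ and therefore delivers the stated inequality.

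I do not expect any serious obstacle: the computation is routine once Lemma \ref{lm3} is in hand. The only point that deserves a line of justification is the interchange of the operator with the $t$-integral in the remainder, which is permissible because $\mathcal{RL}_n^{[\alpha]}$ is a positive linear operator acting on functions of admissible growth (cf.\ Remark \ref{re1}) and the integrand is a continuous function of $t$ of at most linear growth. After that, the argument is purely the linear combination of the first two central moments furnished by Lemma \ref{lm3}.
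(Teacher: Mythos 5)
Your proposal is correct and follows essentially the same route as the paper: Taylor's formula with integral remainder, the standard modulus-of-continuity majorant applied to $f'$, Cauchy--Schwarz giving $\mathcal{RL}_{n}^{[\alpha]}(|t-x|;x)\le\sqrt{\Phi_{n,2}^{\alpha}(x)}$, and the choice $\delta=\sqrt{\Phi_{n,2}^{\alpha}(x)}$ (the paper's written choice $\delta=\Phi_{n,2}^{\alpha}(x)$ is evidently a typo for exactly this). Your exact integration of the majorant merely sharpens the bracket to $\tfrac{3}{2}\sqrt{\Phi_{n,2}^{\alpha}(x)}$ instead of $2\sqrt{\Phi_{n,2}^{\alpha}(x)}$, and the first term is fine since $|1+2\phi-2x\psi|\le 1+2\phi+2x\psi$, so the stated inequality follows.
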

\begin{proof}
For $g\in C_B^1[0,\infty)$ and $t,x\in [0,\infty)$, Taylor's integral remainder formula gives
\begin{eqnarray*}
g(t)-g(x)=g'(x)(t-x)+\int\limits_x^t(g'(u)-g'(x))~du.
\end{eqnarray*}
Applying the operators $\mathcal{RL}_{n}^{[\alpha]}(.;x)$ to both sides, we get
\begin{eqnarray*}
|\mathcal{RL}_{n}^{[\alpha]}(g;x)-g(x)|&\leq & |g'(x)|\left|\mathcal{RL}_{n}^{[\alpha]}((t-x);x)|+\mathcal{RL}_{n}^{[\alpha]}\left(\int\limits_x^t(g'(u)-g'(x))~du \right)\right|.
\end{eqnarray*} 
Using the following standard property of the modulus of continuity,
\begin{eqnarray}
|g(t)-g(x)|\leq \omega(g,\delta)\left(\frac{|t-x|}{\delta}+1 \right).
\end{eqnarray}
Therefore, 
\begin{eqnarray}
\left|\int\limits_x^t(g'(u)-g'(x))~du \right|\leq \omega(g',\delta) \left(\frac{|t-x|}{\delta}+1 \right)|t-x|.
\end{eqnarray} 
Therefore,
\begin{eqnarray*}
|\mathcal{RL}_{n}^{[\alpha]}(g;x)-g(x)|&\leq & |g'(x)||\mathcal{RL}_{n}^{[\alpha]}((t-x);x)|+ \omega(g',\delta) \left(\frac{\mathcal{RL}_{n}^{[\alpha]}((t-x)^2;x)}{\delta}+\mathcal{RL}_{n}^{[\alpha]}(|t-x|;x) \right).
\end{eqnarray*}
Applying the  Cauchy-Schwarz inequality, we obtain 
\begin{eqnarray*}
|\mathcal{RL}_{n}^{[\alpha]}(g;x)-g(x)|&\leq & |g'(x)|\left|\frac{1+2\phi+2x\psi}{2(\psi+\beta_n)} \right|+\omega(g',\delta)\left(\frac{\sqrt{\Phi_{n,2}^{\alpha}(x)}}{\delta}+1 \right)\sqrt{\Phi_{n,2}^{\alpha}(x)}.
\end{eqnarray*}
By choosing, $\delta=\sqrt{\Phi_{n,2}^{\alpha}(x)}$, we get our required result.
\end{proof}

In 1988, Lenze \cite{LB} introduced a Lipschitz maximal function of order $j\in (0,1]$, which is defined below: 
\begin{eqnarray}\label{lm}
{\varpi}_j(g,x)=\underset{t\neq x}\sup\frac{|g(t)-g(x)|}{|t-x|^j},~~~t\in[0,\infty).
\end{eqnarray}  
Next, we estimate the upper bound in the approximation by the proposed operators with the function in terms of the Lipschitz maximal function. This, shows the convergence rate of the operators.
\begin{theorem}
Let $g\in C_B[0,\infty)$, $x\geq 0$. 
Then the following estimate holds:
\begin{eqnarray*}
|\mathcal{RL}_{n}^{[\alpha]}(g;x)-g(x)|&\leq & {\varpi}_j(g,x)\left(\Phi_{n,2}^{\alpha}(x) \right)^{\frac{j}{2}},~~0<j\leq 1.  
\end{eqnarray*} 
\end{theorem}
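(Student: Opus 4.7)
The plan is to reduce the estimate to a moment bound for $\mathcal{RL}_n^{[\alpha]}$ and then invoke H\"older's inequality to convert the fractional moment $\mathcal{RL}_n^{[\alpha]}(|t-x|^j;x)$ into the second moment $\Phi_{n,2}^{\alpha}(x)$ that appears in the statement.

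First I would use the linearity of $\mathcal{RL}_n^{[\alpha]}$ together with $\mathcal{RL}_n^{[\alpha]}(1;x)=1$ from Lemma~\ref{l2} to write
\begin{eqnarray*}
|\mathcal{RL}_n^{[\alpha]}(f;x)-f(x)| = \bigl|\mathcal{RL}_n^{[\alpha]}(f(t)-f(x);x)\bigr| \leq \mathcal{RL}_n^{[\alpha]}\bigl(|f(t)-f(x)|;x\bigr),
\end{eqnarray*}
where the inequality uses positivity of the operator (which is clear from its kernel representation in Remark on display (\ref{o4})).

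Next, directly from the definition (\ref{lm}) of the Lipschitz maximal function, $|f(t)-f(x)| \leq \varpi_j(f,x)\,|t-x|^j$ for all $t\geq 0$, so
\begin{eqnarray*}
|\mathcal{RL}_n^{[\alpha]}(f;x)-f(x)| \leq \varpi_j(f,x)\,\mathcal{RL}_n^{[\alpha]}\bigl(|t-x|^j;x\bigr).
\end{eqnarray*}

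The main (and only nontrivial) step is to bound $\mathcal{RL}_n^{[\alpha]}(|t-x|^j;x)$. Since $0<j\leq 1$, I would apply H\"older's inequality with conjugate exponents $p=2/j$ and $q=2/(2-j)$ to the positive linear functional $\mathcal{RL}_n^{[\alpha]}(\,\cdot\,;x)$, obtaining
\begin{eqnarray*}
\mathcal{RL}_n^{[\alpha]}\bigl(|t-x|^j;x\bigr) \leq \bigl(\mathcal{RL}_n^{[\alpha]}((t-x)^2;x)\bigr)^{j/2}\bigl(\mathcal{RL}_n^{[\alpha]}(1;x)\bigr)^{(2-j)/2} = \bigl(\Phi_{n,2}^{\alpha}(x)\bigr)^{j/2},
\end{eqnarray*}
using Lemma~\ref{l2} in the last equality. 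Combining the two displays yields the claimed bound. I do not anticipate any real obstacle here: the fractional exponent makes direct moment computation unavailable, but H\"older's inequality precisely circumvents this by trading $|t-x|^j$ for a power of $(t-x)^2$, and all ingredients (positivity, $\mathcal{RL}_n^{[\alpha]}(1;x)=1$, the closed form for $\Phi_{n,2}^{\alpha}(x)$) are already in hand.
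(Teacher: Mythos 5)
Your proposal is correct and follows essentially the same route as the paper: bound $|f(t)-f(x)|$ by $\varpi_j(f,x)|t-x|^j$ via the definition of the Lipschitz maximal function, then apply H\"older's inequality with $p=2/j$, $q=2/(2-j)$ together with $\mathcal{RL}_n^{[\alpha]}(1;x)=1$ to replace $\mathcal{RL}_n^{[\alpha]}(|t-x|^j;x)$ by $\bigl(\Phi_{n,2}^{\alpha}(x)\bigr)^{j/2}$. Your write-up is in fact slightly more explicit than the paper's, which states the same two steps without spelling out the intermediate inequalities.
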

\begin{proof}
By the definition of ${\varpi}_j(g,x)$, and then applying positivity and linearity for the operators (\ref{o3}), we have
\begin{eqnarray*}
|\mathcal{RL}_{n}^{[\alpha]}(g;x)-g(x)|\leq {\varpi}_j(g,x)\mathcal{RL}_{n}^{[\alpha]}(|t-x|^j;x).
\end{eqnarray*}
Using H$\ddot{\text{o}}lder$ inequality with $p=\frac{2}{j},~q=\frac{2}{2-j}$ along with  the Lemma \ref{lm3}, we obtain
\begin{eqnarray*}
|\mathcal{RL}_{n}^{[\alpha]}(g;x)-g(x)|&\leq & {\varpi}_j(g,x)\left(\Phi_{n,2}^{\alpha}(x) \right)^{\frac{j}{2}}.
\end{eqnarray*}
Hence, the proof is completed.
\end{proof}
Our next theorem is based on a Lipschitz-type space with two parameters, proposed in \cite{MAH1}. Let $\nu_1\geq 0, \nu_2\geq 0$ be fixed numbers. Then the space is defined as follows:
\begin{eqnarray*}
Lip_M^{\nu_1,\nu_2}(j)=\left\{g\in C_B[0,\infty):|g(y)-g(x)|\leq M\frac{|y-x|^j}{(y+\nu_1x^2+\nu_2x)^{\frac{j}{2}}},~~x,y\in(0,\infty) \right\},
\end{eqnarray*}
where $M$ is a positive constant and $j\in(0,1]$. Now, we are ready to obtain the approximate result.
\begin{theorem}
Let $g\in Lip_M^{\nu_1,\nu_2}(j)$ and $j\in (0,1]$. Then for all $x\in[0,\infty)$, an upper bound in the approximation of the function is given by:
\begin{eqnarray*}
|\mathcal{RL}_{n}^{[\alpha]}(g;x)-g(x)|\leq M \left(\frac{\Phi_{n,2}^{\alpha}(x)}{x(\nu_1x+\nu_2)} \right)^{\frac{j}{2}}, 
\end{eqnarray*}
where $\Phi_{n,2}^{\alpha}(x)=\mathcal{RL}_{n}^{[\alpha]}((t-x)^2;x)$.
\end{theorem}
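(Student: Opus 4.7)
The plan is to exploit the Lipschitz-type bound pointwise inside the operator, pull out the $x$-dependent denominator as a constant (since it does not depend on the integration/summation variable $t$), and then apply H\"older's inequality with a judicious choice of exponents to reduce the problem to the second central moment $\Phi_{n,2}^{\alpha}(x)$, which is already at our disposal from Lemma \ref{lm3}.

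\textbf{Step 1.} I would start from the positivity and linearity of $\mathcal{RL}_{n}^{[\alpha]}$ together with $\mathcal{RL}_{n}^{[\alpha]}(1;x)=1$ from Lemma \ref{l2}, writing
\begin{eqnarray*}
|\mathcal{RL}_{n}^{[\alpha]}(f;x)-f(x)| \leq \mathcal{RL}_{n}^{[\alpha]}(|f(t)-f(x)|;x).
\end{eqnarray*}
Applying the Lipschitz assumption $f\in Lip_M^{\nu_1,\nu_2}(j)$ inside the operator gives
\begin{eqnarray*}
|\mathcal{RL}_{n}^{[\alpha]}(f;x)-f(x)| \leq M\,\mathcal{RL}_{n}^{[\alpha]}\!\left(\frac{|t-x|^{j}}{(t+\nu_1 x^{2}+\nu_2 x)^{j/2}};x\right).
\end{eqnarray*}

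\textbf{Step 2.} The key observation is that $t\geq 0$ implies $t+\nu_1 x^2+\nu_2 x \geq \nu_1 x^2+\nu_2 x = x(\nu_1 x+\nu_2)$. Since this lower bound is independent of $t$, I can extract the denominator as a constant in $t$:
\begin{eqnarray*}
|\mathcal{RL}_{n}^{[\alpha]}(f;x)-f(x)| \leq \frac{M}{(x(\nu_1 x+\nu_2))^{j/2}}\,\mathcal{RL}_{n}^{[\alpha]}(|t-x|^{j};x).
\end{eqnarray*}

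\textbf{Step 3.} To handle the remaining moment $\mathcal{RL}_{n}^{[\alpha]}(|t-x|^{j};x)$ for general $j\in(0,1]$, I would apply H\"older's inequality for positive linear operators with conjugate exponents $p=2/j$ and $q=2/(2-j)$, using once more that $\mathcal{RL}_{n}^{[\alpha]}(1;x)=1$. This yields
\begin{eqnarray*}
\mathcal{RL}_{n}^{[\alpha]}(|t-x|^{j};x) \leq \bigl(\mathcal{RL}_{n}^{[\alpha]}((t-x)^{2};x)\bigr)^{j/2} = \bigl(\Phi_{n,2}^{\alpha}(x)\bigr)^{j/2}.
\end{eqnarray*}
Combining with Step 2 gives exactly the claimed bound.

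\textbf{Main obstacle.} The only subtle point is justifying the uniform lower bound on the denominator: one must ensure that the estimate is meaningful, i.e.\ that $x(\nu_1 x+\nu_2)>0$, which is automatic for $x>0$ under the standing assumption $\nu_1,\nu_2\geq 0$ (not both zero, implicitly). At $x=0$ the right-hand side is to be interpreted as a limit, and since $\Phi_{n,2}^{\alpha}(0)\to 0$ suitably, the inequality remains consistent. Aside from this mild boundary consideration, the proof is a clean two-line computation once the H\"older step is set up.
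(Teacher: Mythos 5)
Your proof is correct and follows essentially the same path as the paper's: bound $|\mathcal{RL}_{n}^{[\alpha]}(f;x)-f(x)|$ by the operator applied to the Lipschitz estimate, lower-bound the denominator $t+\nu_1x^2+\nu_2x$ by $\nu_1x^2+\nu_2x=x(\nu_1x+\nu_2)$ using $t\geq 0$, and reduce the remaining fractional moment to $\Phi_{n,2}^{\alpha}(x)$. The only difference is cosmetic: the paper splits into the cases $j=1$ (Cauchy--Schwarz on $\mathcal{RL}_{n}^{[\alpha]}(|t-x|;x)$) and $j\in(0,1)$ (H\"older with exponents $1/j$ and $1/(1-j)$ applied to the explicit kernel representation), whereas your single application of H\"older with $p=2/j$, $q=2/(2-j)$ at the level of the positive linear operator, using $\mathcal{RL}_{n}^{[\alpha]}(1;x)=1$, handles both cases at once -- a slightly cleaner packaging of the same argument.
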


\begin{proof}
We consider the cases $j=1$ and $0<j<1$ separately. We know that the proposed operators are positive, linear, and reproduce the constant term.

\textbf{Case 1.} When $j=1$, then we can write 
\begin{eqnarray*}
|\mathcal{RL}_{n}^{[\alpha]}(g;x)-g(x)|&\leq & \mathcal{RL}_{n}^{[\alpha]}(|g(t)-g(x)|;x)\\
&\leq & (\beta_n+\psi)\sum\limits_{i=0}^{\infty}(1+\beta_n\alpha)^{\frac{-x}{\alpha}}\left(\alpha+\frac{1}{\beta_n}\right)^{-i}\frac{x^{(i,-\alpha)}}{i!}\int\limits_{\frac{i+\phi}{\beta_n+\psi}}^{\frac{1+i+\phi}{\beta_n+\psi}}|g(t)-g(x)|~dt\\
&\leq & M (\beta_n+\psi)\sum\limits_{i=0}^{\infty}(1+\beta_n\alpha)^{\frac{-x}{\alpha}}\left(\alpha+\frac{1}{\beta_n}\right)^{-i}\frac{x^{(i,-\alpha)}}{i!}\int\limits_{\frac{i+\phi}{\beta_n+\psi}}^{\frac{1+i+\phi}{\beta_n+\psi}} \frac{|t-x|}{(t+\nu_1x^2+\nu_2x)^{\frac{1}{2}}}~dt\\
&\leq & M (\beta_n+\psi)\sum\limits_{i=0}^{\infty}(1+\beta_n\alpha)^{\frac{-x}{\alpha}}\left(\alpha+\frac{1}{\beta_n}\right)^{-i}\frac{x^{(i,-\alpha)}}{i!}\int\limits_{\frac{i+\phi}{\beta_n+\psi}}^{\frac{1+i+\phi}{\beta_n+\psi}} \frac{|t-x|}{(\nu_1x^2+\nu_2x)^{\frac{1}{2}}}~dt\\
&=& \frac{M}{(\nu_1x^2+\nu_2x)^{\frac{1}{2}}}\mathcal{RL}_{n}^{[\alpha]}(|t-x|;x)\\
&\leq & M \left(\frac{\Phi_{n,2}^{\alpha}(x)}{x(\nu_1x+\nu_2)} \right)^{\frac{1}{2}}.
\end{eqnarray*}
\textbf{Case 2.} Now let $j\in (0,1)$. Let $p=\frac{1}{j},~q=\frac{1}{1-j}$, using the linear and positivity properties of the operators, and then using H$\ddot{\text{o}}$lder's inequality, we have
\begin{eqnarray*}
|\mathcal{RL}_{n}^{[\alpha]}(g;x)-g(x)|&\leq & (\beta_n+\psi)\sum\limits_{i=0}^{\infty}(1+\beta_n\alpha)^{\frac{-x}{\alpha}}\left(\alpha+\frac{1}{\beta_n}\right)^{-i}\frac{x^{(i,-\alpha)}}{i!}\int\limits_{\frac{i+\phi}{\beta_n+\psi}}^{\frac{1+i+\phi}{\beta_n+\psi}}|g(t)-g(x)|~dt\\
&\leq & \sum\limits_{i=0}^{\infty}(1+\beta_n\alpha)^{\frac{-x}{\alpha}}\left(\alpha+\frac{1}{\beta_n}\right)^{-i}\frac{x^{(i,-\alpha)}}{i!}\left((\beta_n+\psi)\int\limits_{\frac{i+\phi}{\beta_n+\psi}}^{\frac{1+i+\phi}{\beta_n+\psi}}|g(t)-g(x)|^{\frac{1}{j}}~dt\right)^{j}\\
&\leq & M \left( \sum\limits_{i=0}^{\infty}(1+\beta_n\alpha)^{\frac{-x}{\alpha}}\left(\alpha+\frac{1}{\beta_n}\right)^{-i}\frac{x^{(i,-\alpha)}}{i!}(\beta_n+\psi)\int\limits_{\frac{i+\phi}{\beta_n+\psi}}^{\frac{1+i+\phi}{\beta_n+\psi}}\frac{|t-x|}{(t+\nu_1x^2+\nu_2x)^{\frac{1}{2}}}~dt\right)^{j}\\
&\leq & \frac{M}{(\nu_1x^2+\nu_2x)^{\frac{j}{2}}} \left(\mathcal{RL}_{n}^{[\alpha]}(|t-x|;x)\right)^{j}\\
&\leq & M \left(\frac{\Phi_{n,2}^{\alpha}(x)}{x(\nu_1x+\nu_2)} \right)^{\frac{j}{2}}.
\end{eqnarray*}
Hence, the proof is completed.
\end{proof}

\section{Direct Result}\label{sec3}
This section is devoted to a direct approximation result for the proposed operators. To establish the direct approximation theorem, we employ Peetre's $K$-functional, defined for $\delta>0$, by:
\begin{eqnarray*}
K_2(g;\delta)=\underset{g_1\in C_B^2[0,\infty)}\inf\{\|g-g_1\|+\delta \|g_1''\| \},
\end{eqnarray*} 
where $C_B^2[0,\infty)=\{g\in C_B[0,\infty):g', g''\in C_B[0,\infty)\}$.\\

The following well-known relation between Peetre's $K$-functional and the second-order modulus of smoothness is given in \cite{q1}:
\begin{eqnarray}\label{pe1}
K_2(g;\delta)\leq M \omega_2(g;\sqrt{\delta}),
\end{eqnarray} 
where the second-order modulus of smoothness is defined by 
\begin{eqnarray*}
\omega_2(g;\sqrt{\delta})=\underset{h\in[0,\sqrt{\delta}],x\in[0,\infty)}\sup \{|g(x+2h)-2g(x+h)+g(x)|\}, ~g\in C_B[0,\infty).
\end{eqnarray*}
The first-order modulus of continuity is defined by
$$\omega(g;\sqrt{\delta})=\underset{h\in[0,\sqrt{\delta}],x\in[0,\infty)}\sup \{|g(x+h)-g(x)|\}, ~g\in C_B[0,\infty).$$
The next theorem addresses the idea of the convergence rate in terms of the modulus of continuity, which gives a convergence guarantee.
\begin{theorem}
Let $g\in C_B[0,\infty)$. Then there exists a positive constant $M_1$, such that
\begin{eqnarray*}
|\mathcal{RL}_{n}^{[\alpha]}(g;x)-g(x)|\leq M_1\omega_2(g;\sqrt{\gamma_n(x)})+\omega(g;\eta_n(x)),~~x\in[0,\infty),
\end{eqnarray*}
where 
\begin{eqnarray*}
\gamma_n(x)=\Phi_{n,2}^{\alpha}(x)+\left(\frac{1+2\phi+2x\beta_n}{2(\psi+\beta_n)}-x \right)^2,~~\eta_n(x)=\left|\frac{1+2\phi-2x\psi}{2(\psi+\beta_n)} \right|,~\text{and}~~\Phi_{n,2}^{\alpha}(x)=\mathcal{RL}_{n}^{[\alpha]}((t-x)^2;x).
\end{eqnarray*}
\end{theorem}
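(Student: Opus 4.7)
The plan is to use the standard auxiliary-operator technique of Ditzian--Totik so that the first moment is killed, reducing the estimate to a Peetre $K$-functional bound. Define, for $x\ge0$ and $f\in C_B[0,\infty)$, the auxiliary operator
\begin{eqnarray*}
\widetilde{\mathcal{RL}}_{n}^{[\alpha]}(f;x)=\mathcal{RL}_{n}^{[\alpha]}(f;x)+f(x)-f\!\left(\frac{1+2\phi+2x\beta_n}{2(\psi+\beta_n)}\right).
\end{eqnarray*}
By Lemma \ref{l2}, this operator preserves constants and linear functions, i.e.\ $\widetilde{\mathcal{RL}}_{n}^{[\alpha]}(1;x)=1$ and $\widetilde{\mathcal{RL}}_{n}^{[\alpha]}(t-x;x)=0$. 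Moreover, by Remark \ref{re1}, $|\widetilde{\mathcal{RL}}_{n}^{[\alpha]}(h;x)|\le 3\|h\|_{C_B[0,\infty)}$ for every $h\in C_B[0,\infty)$.

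Next, for $g\in C_B^2[0,\infty)$ apply Taylor's formula with integral remainder, $g(t)=g(x)+g'(x)(t-x)+\int_x^t(t-u)g''(u)\,du$. Since $\widetilde{\mathcal{RL}}_{n}^{[\alpha]}$ annihilates the linear part, one obtains
\begin{eqnarray*}
\bigl|\widetilde{\mathcal{RL}}_{n}^{[\alpha]}(g;x)-g(x)\bigr|
&\le& \|g''\|\Bigl(\tfrac12\mathcal{RL}_{n}^{[\alpha]}((t-x)^2;x)+\tfrac12\bigl(\mathcal{RL}_{n}^{[\alpha]}(t;x)-x\bigr)^{2}\Bigr) \\
&\le& \|g''\|\,\gamma_n(x),
\end{eqnarray*}
after identifying the bracketed quantity with $\gamma_n(x)$ via Lemma \ref{l2}.

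Now split the error for arbitrary $f\in C_B[0,\infty)$ using an approximant $g\in C_B^2[0,\infty)$:
\begin{eqnarray*}
|\mathcal{RL}_{n}^{[\alpha]}(f;x)-f(x)|
&\le& \bigl|\widetilde{\mathcal{RL}}_{n}^{[\alpha]}(f-g;x)\bigr|+|(f-g)(x)|+\bigl|\widetilde{\mathcal{RL}}_{n}^{[\alpha]}(g;x)-g(x)\bigr| \\
&& +\,\Bigl|f\!\left(\tfrac{1+2\phi+2x\beta_n}{2(\psi+\beta_n)}\right)-f(x)\Bigr|.
\end{eqnarray*}
Using the $3$-boundedness of $\widetilde{\mathcal{RL}}_{n}^{[\alpha]}$ the first two terms total at most $4\|f-g\|$, the third is bounded by $\|g''\|\gamma_n(x)$ from the previous step, and the last is dominated by $\omega(f;\eta_n(x))$ since $\eta_n(x)=|\mathcal{RL}_{n}^{[\alpha]}(t;x)-x|$ by Lemma \ref{lm3}. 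Taking the infimum over $g\in C_B^2[0,\infty)$ yields
\begin{eqnarray*}
|\mathcal{RL}_{n}^{[\alpha]}(f;x)-f(x)|\le 4\,K_2(f;\gamma_n(x))+\omega(f;\eta_n(x)),
\end{eqnarray*}
and applying the classical equivalence (\ref{pe1}) between the Peetre $K$-functional and $\omega_2$ gives the required bound with $M_1=4M$.

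The only delicate point is the algebraic identification of the bracket $\tfrac12\mathcal{RL}_{n}^{[\alpha]}((t-x)^2;x)+\tfrac12(\mathcal{RL}_{n}^{[\alpha]}(t;x)-x)^2$ with $\gamma_n(x)$ (really just pulling the factor $\tfrac12$ into the constant and using Lemma \ref{l2}); the rest is routine once the auxiliary operator is set up.
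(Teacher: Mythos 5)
Your proposal is correct and follows essentially the same route as the paper: the same first-moment-annihilating auxiliary operator (the paper's ``$+f(\cdot)$'' in its definition of $\O_n^{[\alpha]}$ is a sign typo for the subtraction you use, as its claim $\O_n^{[\alpha]}(t-x;x)=0$ confirms), the same Taylor-with-integral-remainder bound yielding $\gamma_n(x)\|g''\|$, the same $4\|f-g\|$ splitting via $3$-boundedness, and the same passage through the Peetre $K$-functional and the equivalence (\ref{pe1}). The only cosmetic difference is that you retain the factor $\tfrac12$ in the remainder estimate before discarding it, whereas the paper uses the cruder bound $(t-x)^2\|g''\|$ directly.
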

\begin{proof}
For $x\geq 0$, consider the auxiliary operators  $\{\O_{n}^{[\alpha]}(g;x)\}$, are defined by
\begin{eqnarray*}
\O_{n}^{[\alpha]}(g;x)=\mathcal{RL}_{n}^{[\alpha]}(g;x)+g(x)-g\left(\frac{1+2\phi+2x\beta_n}{2(\psi+\beta_n)} \right),
\end{eqnarray*}
and by Lemma \ref{lm3}, one can get $\O_{n}^{[\alpha]}((t-x);x)=0$. By Taylor's integral remainder formula and let $f\in C_B^2[0,\infty)$, we have
\begin{eqnarray*}
g(t)-g(x)=(t-x)g'(x)+\int\limits_x^t(t-u)g''(u)~du,~~~~g\in C_B^2[0,\infty). 
\end{eqnarray*} 
Now applying the auxiliary operators $\{\O_{n}^{[\alpha]}\}$ over the above relation, we get
\begin{eqnarray*}
\O_{n}^{[\alpha]}(g;x)-g(x)&=& g'(x)\O_{n}^{[\alpha]}(t-x;x)+\O_{n}^{[\alpha]}\left( \int\limits_x^t(t-u)g''(u)~du\right)\\
&=& \mathcal{RL}_{n}^{[\alpha]}\left( \int\limits_x^t(t-u)g''(u)~du\right)- \int\limits_x^{\left(\frac{1+2\phi+2x\beta_n}{2(\psi+\beta_n)} \right)}\left(\frac{1+2\phi+2x\beta_n}{2(\psi+\beta_n)}-u \right)g''(u)~du.
\end{eqnarray*} 
Since, 
\begin{eqnarray*}
\left|\int\limits_x^t(t-u)g''(u)~du\right|\leq\|g''\|\int\limits_x^t|t-u|~du \leq (t-x)^2\|g''\|,
\end{eqnarray*}
and 
\begin{eqnarray*}
\left|\int\limits_x^{\left(\frac{1+2\phi+2x\beta_n}{2(\psi+\beta_n)} \right)}\left(\frac{1+2\phi+2x\beta_n}{2(\psi+\beta_n)}-u \right)g''(u)~du\right|\leq \left(\frac{1+2\phi+2x\beta_n}{2(\psi+\beta_n)}-x \right)^2\|g''\|.
\end{eqnarray*}
Consequently, 
\begin{eqnarray*}
|\O_{n}^{[\alpha]}(g;x)-g(x)|&\leq & \| g''\| \Phi_{n,2}^{\alpha}(x)+\left(\frac{1+2\phi+2x\beta_n}{2(\psi+\beta_n)}-x \right)^2\|g''\|\\
&=& \gamma_n(x)\|g''\|.
\end{eqnarray*}
Also, one can obtain 
\begin{eqnarray*}
|\O_{n}^{[\alpha]}(g;x)|\leq |\mathcal{RL}_{n}^{[\alpha]}(g;x)|+2\|g\|\leq 3\|g\|.
\end{eqnarray*}
Let an auxiliary function $f\in C_B^2[0,\infty)$, and combining the above estimates, we obtain
\begin{eqnarray*}
|\mathcal{RL}_{n}^{[\alpha]}(g;x)-g(x)|&\leq & |\mathcal{RL}_{n}^{[\alpha]}((g-f);x)-(g-f)(x)|+|\mathcal{RL}_{n}^{[\alpha]}(f;x)-f(x)|\\
&&+\left|g(x)- g\left(\frac{1+2\phi+2x\beta_n}{2(\psi+\beta_n)} \right)\right| \\
&\leq & 4\|g-f\|+\gamma_n(x)\|f''\|+\omega\left(g;\left|\frac{1+2\phi-2x\psi}{2(\psi+\beta_n)} \right| \right).
\end{eqnarray*}
Now, taking the infimum over all $f\in C_B^2[0,\infty)$ on the right-hand side of the above inequality and using (\ref{pe1}), we get
\begin{eqnarray*}
|\mathcal{RL}_{n}^{[\alpha]}(g;x)-g(x)|&\leq & M_1 \omega_2(g;\sqrt{\gamma_n(x)})+\omega(g,\eta_n(x)).
\end{eqnarray*}
This completes the proof.
\end{proof}


We now prove the convergence of the proposed operators \eqref{o3} by applying the Bohman-Korovkin theorem \cite{q3}.


\begin{theorem}\label{th1}
Let $g\in C[0,\infty)$, where $C[0,\infty)$ denotes the space of all continuous function defined on $[0,\infty)$,  and $\beta_n\to\infty$, $\alpha\to 0$ as $n\to\infty$. 
Then, the following convergence holds:
\begin{eqnarray}
\underset{\beta_n\to\infty}\lim \mathcal{RL}_{n}^{[\alpha]}(g;x)=g(x),
\end{eqnarray}
uniformly on each $[0,d]\subset[0,\infty)$. 
\end{theorem} 
\begin{proof}
By Lemma \ref{l2}, under the assumptions $\alpha\to 0$, $\beta_n\to\infty$ as $n\to\infty$, we obtain
$$\underset{n\to\infty}\lim \mathcal{RL}_{n}^{[\alpha]}(t^r;x)=x^r,~~~r=0,1,2,$$
uniformly uniformly on every compact interval $[0,d]$ of $[0,\infty)$. Since the operators $\mathcal{RL}_{n}^{[\alpha]}$ are positive and linear, the desired convergence follows immediately from the Bohman-Korovkin theorem (see also \cite{q4}).
\end{proof}
\begin{remark}\label{remark3}
For $n\in\mathbb{N}$, $\max\{\alpha\}=\frac{1}{\beta_n}$ and $0\leq\phi\leq\psi$, there exists a positive constant $M$, such that
\begin{eqnarray}
\mathcal{RL}_{n}^{[\alpha]}((t-x)^2;x)\leq \frac{M}{(\beta_n+\psi)}(1+x)^2.
\end{eqnarray}
\end{remark}
\section{Asymptotic behavior of the operators}\label{sec4}
The following theorem establishes a \emph{Voronovskaya-type asymptotic formula} for the proposed operators, describing the leading term in the approximation error; more precisely, it provides the first-order asymptotic behavior of the approximation error.
\begin{theorem}
Let $g\in C[0,\infty)$ with $g',g''\in C[0,\infty)$ and  $\alpha\to 0$ as $\beta_n\to\infty$. 
Then the following asymptotic formula holds
\begin{eqnarray*}
\underset{\beta_n\to\infty}\lim(\beta_n[\mathcal{RL}_{n}^{[\alpha]}(g;x)-g(x)])=\left(\frac{1}{2}-x \psi +\phi\right)g'(x)+xg''(x).
\end{eqnarray*}
\end{theorem}
\begin{proof}
By Taylor's expansion
\begin{eqnarray}
g(t)=g(x)+g'(x)(t-x)+g''(x)\frac{(t-x)^2}{2!}+\xi(t,x)(t-x)^2,
\end{eqnarray}
where  $\xi(t,x)$ denotes the remainder term satisfying $\underset{t\to x}\lim\xi(t,x)=0$. Applying the operators (\ref{o3}) to both sides of the above expression, we get
\begin{eqnarray*}
\mathcal{RL}_{n}^{[\alpha]}(g(t)-g(x);x)=g'(x)\mathcal{RL}_{n}^{[\alpha]}(t-x;x)+\frac{g''(x)}{2}\mathcal{RL}_{n}^{[\alpha]}((t-x)^2;x)+\mathcal{RL}_{n}^{[\alpha]}(\xi(t,x)(t-x)^2;x).
\end{eqnarray*}
So, with the help of the Cauchy-Schwarz Inequality, we have
\begin{eqnarray}
\mathcal{RL}_{n}^{[\alpha]}(\xi(t,x)(t-x)^2;x)\leq \sqrt{\mathcal{RL}_{n}^{[\alpha]}(\xi^2(t,x);x)}\sqrt{\mathcal{RL}_{n}^{[\alpha]}((t-x)^4;x)}.
\end{eqnarray} 
By Theorem \ref{th1}, we obtain 
\begin{eqnarray}\label{e1}
\underset{\beta_n\to\infty}\lim\mathcal{RL}_{n}^{[\alpha]}(\xi^2(t,x);x)=\xi^2(x,x)=0.
\end{eqnarray}
Combining (\ref{e1}) with Lemma \ref{l1}, we get
\begin{eqnarray}\label{e2}
\underset{\beta_n\to\infty}\lim\{\beta_n^2\mathcal{RL}_{n}^{[\alpha]}((t-x)^4;x)&=& 12x^2,
\end{eqnarray}
and
\begin{eqnarray*}
\underset{\beta_n\to\infty}\lim\{\beta_n\mathcal{RL}_{n}^{[\alpha]}((t-x)^2;x)&=& 2x.
\end{eqnarray*}
Therefore, by Lemma \ref{l1}, we have
\begin{eqnarray*}
\underset{\beta_n\to\infty}\lim(\beta_n[\mathcal{RL}_{n}^{[\alpha]}(g;x)-g(x)])=\left(\frac{1}{2}-x \psi +\phi\right)g'(x)+xg''(x).
\end{eqnarray*}
Hence proved.
\end{proof}

\section{Convergence result in weighted spaces}\label{sec5}
This section is devoted to the convergence results of the operators (\ref{o3}) in the \emph{weight space}. Further details on {weighted spaces} can be found in \cite{q5,q6}. Additionally, properties related to weighted approximation are explored in \cite{ZDX1}.\\
 
Let $B_w[0,\infty)=\{g:[0,\infty)\to\mathbb{R} |~~  |g(x)|\leq Mw(x)\}$,  where $M>0$ is a constant. Along with this, the supremum norm is defined by $$\|g\|_w=\underset{x\in [0,\infty)}\sup\frac{g(x)}{w(x)}<+\infty.$$ 
Also, define the spaces
$$C_w[0,\infty)=\{g\in B_w[0,\infty), ~g~\text{is~continuous} \},$$
$$C_w^k[0,\infty)=\{g\in C_w[0,\infty),\underset{x\to\infty}\lim\frac{|g(x)|}{w(x)}=k_g<+\infty\},$$      
where $w(x)=1+x^2$ is a weight function. 
The following lemma will be used to prove the main theorem.
\begin{lemma}\cite{G2}
Let $L_n:C_w[0,\infty)\to B_w[0,\infty)$ such that $\underset{n\to\infty}\lim\|L_n(t^r;x)-x^r\|_w=0,~~r=0,1,2$. Then, for every $g\in C_w^k[0,\infty)$, we have 
$$\underset{n\to\infty}\lim\|L_ng-g\|_w=0.$$
\end{lemma}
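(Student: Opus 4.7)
I would follow the classical strategy for proving Gadzhiev's weighted Korovkin theorem, namely reducing to a compact domain plus a tail estimate. First I would exploit the structure of $C_w^k[0,\infty)$: since $f(x)/(1+x^2)\to k_f$ as $x\to\infty$, I write $f(x)=k_f(1+x^2)+g(x)$ where $g\in C_w[0,\infty)$ and $g(x)/(1+x^2)\to 0$ at infinity. The hypothesis $\|L_n(1;x)-1\|_w+\|L_n(t^2;x)-x^2\|_w\to 0$ immediately gives $\|L_n(k_f(1+t^2);\cdot)-k_f(1+x^2)\|_w\to 0$, so it suffices to handle $g$.

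Next, I would fix $\varepsilon>0$ and pick $x_0>0$ so large that $|g(t)|\le \varepsilon(1+t^2)$ for all $t>x_0$; on $[0,x_0]$, continuity of $g$ yields a uniform bound $|g|\le G$. For any $x\ge 0$ this gives, by positivity of $L_n$,
\begin{eqnarray*}
L_n(|g|;x) \;\le\; G\,L_n(1;x)+\varepsilon\,L_n(1+t^2;x).
\end{eqnarray*}
Using the weighted convergence hypothesis I write $L_n(1;x)\le 1+\eta_n(1+x^2)$ and $L_n(1+t^2;x)\le (1+x^2)+2\eta_n(1+x^2)$ with $\eta_n\to 0$; dividing by $1+x^2$ and restricting to $x>x_0$ yields
\begin{eqnarray*}
\frac{|L_n(g;x)-g(x)|}{1+x^2} \;\le\; \frac{G}{1+x_0^2}+G\eta_n+\varepsilon(1+2\eta_n)+\varepsilon.
\end{eqnarray*}
Choosing $x_0$ large first and then $n$ large makes the tail contribution to $\|L_n(g)-g\|_w$ arbitrarily small.

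Finally, on the compact set $[0,x_0]$ the three weighted convergences imply uniform convergence $L_n(t^r;x)\to x^r$ for $r=0,1,2$, so the classical Bohman-Korovkin theorem gives $\sup_{x\in[0,x_0]}|L_n(g;x)-g(x)|\to 0$; dividing by $1+x^2\ge 1$ preserves this. Combining the compact and tail estimates gives $\|L_n(g)-g\|_w\to 0$, and hence $\|L_n(f)-f\|_w\to 0$.

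The main obstacle, in my view, is producing a genuinely small tail bound: without first peeling off the explicit polynomial part $k_f(1+x^2)$, the assumptions only control $L_n(1+t^2;x)$ up to a multiple of the weight itself, so $|L_n(f;x)|/(1+x^2)$ cannot by itself be made small at infinity. The decomposition $f=k_f(1+x^2)+g$ is exactly what allows the small factor $\varepsilon$, coming from the tail decay of $g/w$, to multiply $L_n(1+t^2;x)/(1+x^2)$ and give a tail estimate that tends to zero.
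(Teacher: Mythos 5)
The paper itself offers no proof of this lemma: it is quoted from Gadjiev \cite{G1,G2} and used as a black box, so there is no internal argument to compare against. Your outline is essentially the standard proof of Gadjiev's weighted Korovkin theorem, and its overall logic is sound: peeling off $k_f(1+x^2)$, bounding $|g(t)|\le G+\varepsilon(1+t^2)$ with $G=\sup_{[0,x_0]}|g|$, and treating $[0,x_0]$ and $(x_0,\infty)$ separately is exactly the classical route, and your tail computation is correct as written.

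Two steps need tightening before this is a complete proof. First, on the compact part you cannot literally invoke the classical Bohman--Korovkin theorem, because $L_n(g;x)$ for $x\in[0,x_0]$ depends on the values of $g$ on all of $[0,\infty)$, not only on $[0,x_0]$. The standard repair is the quantitative Korovkin argument: combine uniform continuity of $g$ on a slightly enlarged interval with the growth bound $|g(t)|\le M_g(1+t^2)$ to get $|g(t)-g(x)|\le\varepsilon+K\delta^{-2}(t-x)^2$ for all $t\ge0$, $x\in[0,x_0]$, and then use that the three hypotheses force $L_n((t-x)^2;x)\to0$ uniformly on $[0,x_0]$. Observe that this is the only place where the first-moment condition $r=1$ enters; your tail estimate uses only $r=0,2$, so if the compact step really were the off-the-shelf Korovkin theorem the hypothesis for $r=1$ would be idle --- a sign that the step is being glossed over. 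Second, in the tail bound the constant $G$ depends on $x_0$ and in general grows with it, so the smallness of $G/(1+x_0^2)$ for large $x_0$ is not automatic; it does hold, but only because $g/w\to0$ at infinity (pick $x_1$ with $|g(t)|\le\varepsilon(1+t^2)$ for $t>x_1$, then $x_0\ge x_1$ with $\sup_{[0,x_1]}|g|\le\varepsilon(1+x_0^2)$, giving $G/(1+x_0^2)\le\varepsilon$), and this one-line justification should be stated. With these two repairs your plan yields a correct proof of the cited lemma.
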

\begin{theorem}
Let $\{\mathcal{RL}_{n}^{[\alpha]}g\}$ be the sequence of positive linear operators defined by (\ref{o3}). Then the following convergence holds:
\begin{eqnarray*}
\underset{\beta_n\to\infty}\lim \|\mathcal{RL}_{n}^{[\alpha]}g-g\|_w=0,~~\text{for}~g\in C_w^k[0,\infty).
\end{eqnarray*}
\end{theorem}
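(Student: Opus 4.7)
The plan is to invoke the weighted Korovkin-type lemma of Gadzhiev quoted just above. That lemma reduces the weighted convergence for an arbitrary $f\in C_w^k[0,\infty)$ to verifying the three test-function conditions
\begin{eqnarray*}
\lim_{n\to\infty}\|\mathcal{RL}_{n}^{[\alpha]}(t^r;x)-x^r\|_w=0,\quad r=0,1,2,
\end{eqnarray*}
so the entire proof becomes a book-keeping exercise on the moments already computed in Lemma \ref{l2}.

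The case $r=0$ is immediate, since Lemma \ref{l2} gives $\mathcal{RL}_n^{[\alpha]}(1;x)=1$. For $r=1$, I would quote the first central moment from Lemma \ref{lm3}:
\begin{eqnarray*}
\mathcal{RL}_n^{[\alpha]}(t;x)-x=\frac{1+2\phi-2x\psi}{2(\psi+\beta_n)}.
\end{eqnarray*}
The numerator grows at most linearly in $x$, so after dividing by the weight $w(x)=1+x^2$ it remains uniformly bounded in $x\geq 0$; the decaying prefactor $1/(\psi+\beta_n)$ then forces $\|\mathcal{RL}_n^{[\alpha]}(t;\cdot)-x\|_w$ to zero at rate $1/\beta_n$.

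The main obstacle is $r=2$, because the leading coefficient of $\mathcal{RL}_n^{[\alpha]}(t^2;x)$ in Lemma \ref{l2} is of order $\beta_n^2$ and must be cancelled exactly against $x^2$. Starting from Lemma \ref{l2}, I would subtract $x^2$, expand $3(\psi+\beta_n)^2x^2=3\beta_n^2x^2+6\psi\beta_n x^2+3\psi^2x^2$, and cancel the $3\beta_n^2x^2$ with the corresponding term in the numerator. What remains is a numerator with a constant piece, a piece of order $\beta_n x$, the contribution $3\alpha x\beta_n^2$, and an $x^2$-piece of order $\beta_n$. The critical point is the hypothesis $\max\alpha=1/\beta_n$: it tames the dangerous-looking $3\alpha x\beta_n^2$ to $3x\beta_n$, so that every surviving term in the numerator is at most quadratic in $x$ with coefficients of order $\beta_n$. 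Dividing by $(\psi+\beta_n)^2(1+x^2)$ then yields a quantity of order $1/\beta_n$ uniformly in $x\geq 0$.

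Combining the three moment estimates and invoking the Gadzhiev lemma gives the claim. The only subtlety to watch is the $\alpha$-dependence in the second moment; without the normalization $\max\alpha=1/\beta_n$ the second-moment ratio would fail to vanish, and this is precisely why the hypothesis is imposed in the preceding sections.
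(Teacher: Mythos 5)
Your proposal is correct and follows essentially the same route as the paper: reduce to the three test functions via the Gadzhiev weighted Korovkin lemma, then bound $\|\mathcal{RL}_{n}^{[\alpha]}(t^r;\cdot)-x^r\|_w$ for $r=0,1,2$ using the moments of Lemma \ref{l2}, with the term $3\alpha x\beta_n^2$ in the second moment controlled by the standing assumption on $\alpha$ (the paper uses $\alpha\to 0$ as $\beta_n\to\infty$, which is what is really needed; your normalization $\alpha\le 1/\beta_n$ is a sufficient special case).
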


\begin{proof}
    It is sufficient to show that $\underset{\beta_n\to\infty}\lim \|\mathcal{RL}_{n}^{[\alpha]}(t^r;x)-x^r\|_w=0$ holds for $r=0,1,2$ to prove the above theorem. Clearly, 
\begin{eqnarray}
\underset{\beta_n\to\infty}\lim \|\mathcal{RL}_{n}^{[\alpha]}(1;x)-1\|_w=0.
\end{eqnarray}
 By Lemma \ref{l2}, we have
 \begin{eqnarray*}
\|\mathcal{RL}_{n}^{[\alpha]}(t;x)-x\|_w &=& \underset{x\geq 0}\sup  \frac{\left|\frac{1+2\phi+2x\beta_n}{2(\psi+\beta_n)}-x\right|}{1+x^2}\leq\left(\frac{1+2\phi}{2(\psi+\beta_n)}\right)\underset{x\geq 0}\sup\frac{1}{1+x^2}+\left(\frac{\beta_n}{(\psi+\beta_n)}-1\right)\underset{x\geq 0}\sup\frac{x}{1+x^2},
 \end{eqnarray*}
 implies that $\|\mathcal{RL}_{n}^{[\alpha]}(t;x)-x\|_w \to 0$ as $\beta_n\to\infty$. Similarly, 
 \begin{eqnarray*}
 \|\mathcal{RL}_{n}^{[\alpha]}(t^2;x)-x^2\|_w &=& \underset{x\geq 0}\sup \frac{\left|\frac{ 1+3 \phi +3 \phi ^2+6 x \beta _n+6 x \phi  \beta _n+3 \alpha x \beta _n^2+3 x^2 \beta _n^2}{3\left(\psi +\beta _n\right)^2}-x^2\right|}{1+x^2}\\
 &\leq & \frac{1+3 \phi +3 \phi ^2}{3\left(\psi +\beta _n\right)^2}\underset{x\geq 0}\sup\frac{1}{1+x^2}+\frac{2\beta_n+2\phi\beta_n+3\alpha\beta_n^2}{\left(\psi +\beta _n\right)^2}\underset{x\geq 0}\sup\frac{1}{1+x^2}+\left(\frac{\beta_n^2}{\left(\psi +\beta _n\right)^2}-1\right)\underset{x\geq 0}\sup\frac{x^2}{1+x^2},
 \end{eqnarray*}
shows that $\|\mathcal{RL}_{n}^{[\alpha]}(t^2;x)-x^2\|_w\to 0$ as $\beta_n\to\infty$. 

This completes the proof.
\end{proof}

The following theorem establishes the weighted convergence of the proposed operators in the weighted space $C_w^k[0,\infty)$.
\begin{theorem}
Let $g\in C_w^k[0,\infty)$, $l>0$, and suppose $\alpha=\alpha(n)\to 0$ as $\beta_n\to\infty$. Then  the following relation holds:
\begin{eqnarray}
\underset{\beta_n\to\infty}\lim \underset{x\geq0}\sup \frac{\left|\mathcal{RL}_{n}^{[\alpha]}(g;x)-g(x)\right|}{(1+x^2)^{1+l}}=0,~~~x\geq 0.
\end{eqnarray}
\end{theorem}
\begin{proof}
Let $x_0$ be a fixed point. Then
\begin{eqnarray}\label{eq1}
\nonumber\underset{x\geq0}\sup \frac{\left|\mathcal{RL}_{n}^{[\alpha]}(g;x)-g(x)\right|}{(1+x^2)^{1+l}}&\leq & \underset{x\leq x_0}\sup \frac{\left|\mathcal{RL}_{n}^{[\alpha]}(g;x)-g(x)\right|}{(1+x^2)^{1+l}}+\underset{x>x_0}\sup \frac{\left|\mathcal{RL}_{n}^{[\alpha]}(g;x)-g(x)\right|}{(1+x^2)^{1+l}}\\
\nonumber&\leq & \|\mathcal{RL}_{n}^{[\alpha]}(g;x)-g(x)\|_{C[0,x_0]}+\|g\|_{w} \underset{x>x_0}\sup \frac{\left|\mathcal{RL}_{n}^{[\alpha]}((1+t^2);x)\right|}{(1+x^2)^{1+l}} +\underset{x>x_0}\sup\frac{|g|}{(1+x^2)^{1+l}}\\
&=& B_1+B_2+B_3~(say). 
\end{eqnarray}
Here,
\begin{eqnarray*}
B_3=\underset{x>x_0}\sup\frac{|g|}{(1+x^2)^{1+l}}\leq \frac{\|g\|_{w}}{(1+x_0^2)^l},~~\text{(as~$|g(x)|\leq M(1+x^2))$}.
\end{eqnarray*}
Therefore, choosing $x_0$ sufficiently large, and for an arbitrary $\epsilon>0$, we have
\begin{eqnarray}\label{eq2}
B_3=\frac{\|g\|_{w}}{(1+x_0^2)^l}\leq \frac{\epsilon}{3}.
\end{eqnarray}
By the convergence established in the previous theorem, we have $$\underset{\beta_n\to\infty}\lim \underset{x>x_0}\sup \frac{\left|\mathcal{RL}_{n}^{[\alpha]}((1+t^2);x)\right|}{(1+x^2)}=1.$$ 
There exists $n_1\in\mathbb{N}$, and for any $\epsilon>0$, we have
\begin{eqnarray}\label{eq3}
B_2=\| g\|_w\underset{x>x_0}\sup \frac{\left|\mathcal{RL}_{n}^{[\alpha]}((1+t^2);x)\right|}{(1+x^2)^{1+l}}\leq \frac{\| g\|_w}{(1+x^2)^l}\leq \frac{\| g\|_w}{(1+x_0^2)^l}<\frac{\epsilon}{3}.
\end{eqnarray}
By Theorem \ref{th1}
\begin{eqnarray}\label{eq4}
B_1=\|\mathcal{RL}_{n}^{[\alpha]}(g;x)-g(x)\|_{C[0,x_0]}\|\leq \frac{\epsilon}{3}.
\end{eqnarray}
Combining (\ref{eq2}-\ref{eq4}) with (\ref{eq1}), we obtain our required result.
\end{proof}


\section{Quantitative Approximation}\label{sec6}
To investigate the rate of convergence of the sequence of positive linear operators in the weighted space $C_w^k[0,\infty)$, Ispir \cite{IN1} introduced the weighted modulus of continuity, which is defined as: 
\begin{eqnarray}
\Delta(g;\delta)=\underset{0\leq h\leq\delta,~0\leq x\leq\infty}\sup \frac{|g(x+h)-g(x)|}{(1+h^2)(1+x^2)},~~~~~~~,\delta>0,~~   g\in C_w^k[0,\infty). 
\end{eqnarray} 
There exists a fixed number $x_0$ such that
\begin{eqnarray*}
\Delta(g;\delta)&\leq & \underset{0\leq x\leq x_0, 0\leq h\leq \delta}\sup|g(x+h)-g(x)|+\underset{x_0\leq x\leq \infty}\sup\left|\frac{g(x+h)}{1+(x+h)^2}-k \right|+\delta k\underset{x_0\leq x\leq \infty}\sup\left(\frac{2x+\delta}{1+x^2} \right)+\underset{x_0\leq \infty}\sup\left| \frac{g(x)}{1+x^2}-k\right|\\
&<& \omega(g;\delta)+2\delta k+\epsilon,
\end{eqnarray*}
where $\omega(g;\delta)$ denotes the usual modulus of continuity on  $[0,x_0]$.
Consequently 
$$\underset{\delta\to 0}\lim\Delta(g;\delta)=0.$$
It follows immediately that, $\Delta(g;\lambda\delta)\leq2(1+\delta^2)(1+\lambda)\Delta(g;\delta),~~\lambda>0$. 
Using the definition of the weighted modulus of continuity together with the above inequality, the following estimate holds:
 \begin{eqnarray}
 \nonumber|g(t)-g(x)|&\leq &(1+x^2)(1+(t-x)^2)\Delta(g;|t-x|)\\
 &\leq & 2\left(1+\frac{|t-x|}{\delta}\right)(1+\delta^2)(1+(t-x)^2)(1+x^2)\Delta(g;|t-x|).
 \end{eqnarray}
The weighted modulus of continuity enables us to establish the following quantitative Voronovskaya-type theorem. 
\subsection{Quantitative Voronovskaya-type theorem}
\begin{theorem}\label{th2}
Let $g\in C_w^k[0,\infty)$ be twice continuously differentiable with $g', g''\in C_w^k[0,\infty)$. For all sufficiently large $\beta_n$, we obtain the following result:
\begin{eqnarray*}
\beta_n\left|\mathcal{RL}_{n}^{[\alpha]}(g;x)-g(x)-g'(x)\mathcal{RL}_{n}^{[\alpha]}((t-x);x)-\frac{g''(x)}{2!}\mathcal{RL}_{n}^{[\alpha]}((t-x)^2;x) \right|=O(1)\Delta\left(g'';\sqrt{\frac{1}{\beta_n}}\right).
\end{eqnarray*} 
\end{theorem}
\begin{proof}
By Taylor's expansion
\begin{eqnarray}
g(t)-g(x)=g'(x)(t-x)+\frac{g''(x)}{2}(t-x)^2+\eta(t,x),
\end{eqnarray}
where $\eta(t,x)=\frac{g''(\zeta)-g''(x)}{2!}(\zeta-x)^2$ and $\zeta\in (t,x)$.
Applying the operator (\ref{o3}) to both sides
\begin{eqnarray}\label{n1}
\beta_n\left|\mathcal{RL}_{n}^{[\alpha]}(g;x)-g(x)-g'(x)\mathcal{RL}_{n}^{[\alpha]}((t-x);x)-\frac{g''(x)}{2}\mathcal{RL}_{n}^{[\alpha]}((t-x)^2;x)\right|\leq \beta_n\mathcal{RL}_{n}^{[\alpha]}(|\eta(t,x)|;x).
\end{eqnarray}
Now using the property of weighted modulus of continuity, we obtain
\begin{eqnarray*}
\frac{g''(\zeta)-g''(x)}{2} &\leq &\frac{1}{2}(1+(t-x)^2)(1+x^2)\Delta(g'',|t-x|)\\
&\leq& \left(1+\frac{|t-x|}{\delta} \right)(1+\delta^2)(1+(t-x)^2)(1+x^2)\Delta(g'',\delta).
\end{eqnarray*}
Consequently,
\begin{eqnarray}
\left|\frac{g''(\zeta)-g''(x)}{2}\right| &\leq &  
\begin{cases}
    2(1+\delta^2)^2(1+x^2)\Delta(g'',\delta),& |t-x|<\delta,\\
    2(1+\delta^2)^2(1+x^2)\frac{(t-x)^4}{\delta^4}\Delta(g'',\delta),& |t-x|\geq\delta.
\end{cases} 
\end{eqnarray}
Since $0<\delta<1$
\begin{eqnarray}
\left|\frac{g''(\zeta)-g''(x)}{2}\right| &\leq & 8(1+x^2)\left(1+\frac{(t-x)^4}{\delta^4}\right)\Delta(g'',\delta). 
\end{eqnarray}
Hence, $$|\eta(t,x)|\leq 8(1+x^2)\left((t-x)^2+\frac{(t-x)^6}{\delta^4}\right)\Delta(g'',\delta).$$
Applying the Lemma \ref{l1}, we have
\begin{eqnarray*}
\mathcal{RL}_{n}^{[\alpha]}(| \zeta(t,x)|;x)&\leq & 8(1+x^2)\Delta(g'',\delta)\left(\mathcal{RL}_{n}^{[\alpha]}((t-x)^2;x)+\frac{\mathcal{RL}_{n}^{[\alpha]}((t-x)^6;x)}{\delta^4}\right) \\
&\leq & 8(1+x^2)\Delta(g'',\delta) \left(O\left(\frac{1}{\beta_n} \right)+\frac{1}{\delta^4} O\left(\frac{1}{\beta_n^3} \right) \right),~~\text{as}~\beta_n\to\infty.
\end{eqnarray*}
Choosing, $\delta=\sqrt{\frac{1}{\beta_n}}$ and then
\begin{eqnarray}
\mathcal{RL}_{n}^{[\alpha]}(| \zeta(t,x)|;x)\leq 8(1+x^2)\left( O\left(\frac{1}{\beta_n} \right)\right)\Delta\left(g'',\sqrt{\frac{1}{\beta_n}}\right).
\end{eqnarray}
Thus, we obtain
\begin{eqnarray}\label{n2}
\beta_n\mathcal{RL}_{n}^{[\alpha]}(|\eta(t,x)|;x)=O(1)\Delta\left(g'', \sqrt{\frac{1}{\beta_n}}\right).
\end{eqnarray}
By (\ref{n1}) and (\ref{n2}), the desired estimate is established.
\end{proof}
\subsection{Gr$\ddot{\text{u}}$ss Voronovskaya-type Theorem}
In 1935, Gr$\ddot{\text{u}}$ss \cite{GG}, defined an inequality, known as \emph{Gr$\ddot{\text{u}}$ss type inequality} after his name. This inequality 
plays an important role in approximation theory and has numerous applications in the study of positive linear operators. The application of  Gr$\ddot{\text{u}}$ss inequality can be found in \cite{AGR}, 
and some results are discussed in \cite{GT}.
Furthermore, using the Gr$\ddot{\text{u}}$ss inequality, Gal and Gonska \cite{GG1} estimated a Voronovskaya-type theorem for the Bernstein operators, known as \emph{Gr$\ddot{\text{u}}$ss-Voronovskaya type} theorem. Motivated by these results, we establish the following Gr$\ddot{\text{u}}$ss-Voronovskaya-type theorem for the proposed operators.


\begin{theorem}
Let $f, g\in C_w^k[0,\infty)$ be twice continuously differentiable together with $f', f'', g', g''\in C_w^k[0,\infty)$. Then the following limit holds:
\begin{eqnarray*}
\underset{\beta_n\to\infty}\lim\beta_n\left(\mathcal{RL}_{n}^{[\alpha]}(fg;x)-\mathcal{RL}_{n}^{[\alpha]}(f;x)\mathcal{RL}_{n}^{[\alpha]}(g;x)\right)&=& 2xf'(x)g'(x).
\end{eqnarray*}
\end{theorem}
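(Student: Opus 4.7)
The strategy is to apply the quantitative Voronovskaya expansion of Theorem~\ref{th2} to each of $f$, $g$, and $fg$, then multiply and subtract. By Theorem~\ref{th2}, for any $h$ with $h',h''\in C_w^k[0,\infty)$,
\begin{eqnarray*}
\mathcal{RL}_{n}^{[\alpha]}(h;x)=h(x)+h'(x)\Phi_{n,1}^{\alpha}(x)+\tfrac{1}{2}h''(x)\Phi_{n,2}^{\alpha}(x)+\varepsilon_n(h;x),
\end{eqnarray*}
where $\beta_n\varepsilon_n(h;x)\to 0$ as $\beta_n\to\infty$. The hypotheses on $f,g$ transfer to $fg$ via $(fg)'=f'g+fg'$ and $(fg)''=f''g+2f'g'+fg''$, so the expansion is available for all three functions.

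Next, I would expand the product $\mathcal{RL}_{n}^{[\alpha]}(f;x)\,\mathcal{RL}_{n}^{[\alpha]}(g;x)$ by multiplying the two three-term expansions plus remainders, and subtract from the corresponding expansion of $\mathcal{RL}_{n}^{[\alpha]}(fg;x)$. The $f(x)g(x)$ terms cancel; the $\Phi_{n,1}^{\alpha}(x)$-coefficients cancel by the product rule; and the $\Phi_{n,2}^{\alpha}(x)$-coefficients differ exactly by $f'(x)g'(x)$, the cross-term present in $\tfrac{1}{2}(fg)''$ but absent from the product $\tfrac{1}{2}(f''g+fg'')$. All remaining contributions are of the form $\Phi_{n,1}^{\alpha}(x)^2$, $\Phi_{n,1}^{\alpha}(x)\Phi_{n,2}^{\alpha}(x)$, $\Phi_{n,2}^{\alpha}(x)^2$, or a product of $\varepsilon_n(f;x)$ or $\varepsilon_n(g;x)$ with an $O(1)$ or $O(1/\beta_n)$ factor; I collect them into a single remainder $E_n(x)$, yielding
\begin{eqnarray*}
\mathcal{RL}_{n}^{[\alpha]}(fg;x)-\mathcal{RL}_{n}^{[\alpha]}(f;x)\mathcal{RL}_{n}^{[\alpha]}(g;x)=f'(x)g'(x)\Phi_{n,2}^{\alpha}(x)+E_n(x).
\end{eqnarray*}

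Finally, multiplying by $\beta_n$ and passing to the limit, Lemma~\ref{l1} gives $\beta_n\Phi_{n,2}^{\alpha}(x)\to 2x$, which supplies the leading term $2xf'(x)g'(x)$. From Lemma~\ref{lm3}, $\Phi_{n,1}^{\alpha}(x)=\frac{1+2\phi-2x\psi}{2(\psi+\beta_n)}=O(1/\beta_n)$, and $\Phi_{n,2}^{\alpha}(x)=O(1/\beta_n)$ by Lemma~\ref{l1}; consequently each of $\beta_n\Phi_{n,1}^{\alpha}(x)^2$, $\beta_n\Phi_{n,1}^{\alpha}(x)\Phi_{n,2}^{\alpha}(x)$, and $\beta_n\Phi_{n,2}^{\alpha}(x)^2$ tends to $0$, while $\beta_n\varepsilon_n(h;x)\to 0$ by Theorem~\ref{th2}. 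Hence $\beta_n E_n(x)\to 0$ and the limit equals $2xf'(x)g'(x)$. The only real obstacle is the bookkeeping for $E_n(x)$: one must verify that every cross-term arising from multiplying the two expansions but unmatched by the expansion of $\mathcal{RL}_{n}^{[\alpha]}(fg;x)$ is $o(1/\beta_n)$ after scaling by $\beta_n$. With the order estimates from Lemmas~\ref{lm3}, \ref{l1} and the remainder bound of Theorem~\ref{th2} in hand, this verification is routine.
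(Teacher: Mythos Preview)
Your proposal is correct and follows essentially the same route as the paper: apply the Voronovskaya expansion (Theorem~\ref{th2}) to $f$, $g$, and $fg$, isolate the surviving cross term $f'(x)g'(x)\Phi_{n,2}^{\alpha}(x)$, and invoke Lemma~\ref{l1} to obtain the limit $2xf'(x)g'(x)$. The only cosmetic difference is that the paper first writes out a single algebraic identity (grouping the three Voronovskaya remainders together with coefficients $1$, $-g(x)$, and $-\mathcal{RL}_{n}^{[\alpha]}(f;x)$, plus the correction terms), whereas you multiply the two expansions and subtract; the bookkeeping and the cited tools are the same.
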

\begin{proof}
Using the identity
\begin{eqnarray*}
    (fg)'=f'g+fg' \quad (fg)''=f''g+2f'g'+fg''
\end{eqnarray*}
together with a simple algebraic decomposition, we obtain
\begin{eqnarray}\label{en4}
\nonumber\beta_n\left(\mathcal{RL}_{n}^{[\alpha]}(fg;x)-\mathcal{RL}_{n}^{[\alpha]}(f;x)\mathcal{RL}_{n}^{[\alpha]}(g;x)\right)&=&\beta_n\Bigg(\Bigg(\mathcal{RL}_{n}^{[\alpha]}(fg;x)-f(x)g(x)-(fg)'(x)\mathcal{RL}_{n}^{[\alpha]}((t-x);x)\\\nonumber
&&-\frac{(fg)''(x)}{2!}\mathcal{RL}_{n}^{[\alpha]}((t-x)^2;x)\Bigg)-g(x)\Bigg(\mathcal{RL}_{n}^{[\alpha]}(f;x)-f(x)\\\nonumber
&&-f'(x)\mathcal{RL}_{n}^{[\alpha]}((t-x);x)-\frac{f''(x)}{2!}\mathcal{RL}_{n}^{[\alpha]}((t-x)^2;x)\Bigg)\\\nonumber
&&-\mathcal{RL}_{n}^{[\alpha]}(f;x)\Bigg(\mathcal{RL}_{n}^{[\alpha]}(g;x)-g(x)-g'(x)\mathcal{RL}_{n}^{[\alpha]}((t-x);x)\\\nonumber
&&-\frac{g''(x)}{2!}\mathcal{RL}_{n}^{[\alpha]}((t-x)^2;x)\Bigg)+\frac{g''(x)}{2!}\mathcal{RL}_{n}^{[\alpha]}((t-x)^2;x)\\\nonumber
&&\times\Bigg(f(x)-\mathcal{RL}_{n}^{[\alpha]}(f;x)  \Bigg)+f'(x)g'(x)\mathcal{RL}_{n}^{[\alpha]}((t-x)^2;x)\\
&&+g'(x)\mathcal{RL}_{n}^{[\alpha]}((t-x);x)\left(f(x)-\mathcal{RL}_{n}^{[\alpha]}((f;x) \right) \Bigg)
\end{eqnarray} 
Letting $\beta_n\to\infty$ (and hence, $\alpha\to 0 $), and applying  Theorems \ref{th1} and \ref{th2} together with Lemma \ref{l1}, we obtain
\begin{eqnarray*}
\underset{\beta_n\to\infty}\lim\beta_n\left(\mathcal{RL}_{n}^{[\alpha]}(fg;x)-\mathcal{RL}_{n}^{[\alpha]}(f;x)\mathcal{RL}_{n}^{[\alpha]}(g;x)\right)&=& 2xf'(x)g'(x).
\end{eqnarray*}
This completes the proof.
\end{proof}


\section{Rate of convergence by means of the function of bounded variation}\label{sec7}
This section deals with the upper bound in the approximation in terms of the function with the derivative of \emph{bounded variation}. The approximation of functions with derivatives of bounded variation has been studied extensively. Early contributions include the work of Bojani\'c \cite{BR}, Bojani\'c and Vuilleumier \cite{BRV}, and Cheng \cite{CF1,CF2}. Later, Guo \cite{GS} employed the \emph{Berry–Esseen theorem} to obtain convergence estimates, while Bojani\'c and Khan \cite{BRK}, Shaw et al. \cite{SLL}, and others further developed the rate of convergence for positive linear operators acting on functions with derivatives of bounded variation.

We now establish the rate of convergence of the proposed operators for functions with derivatives of bounded variation.
Let $DBV[0,\infty)$ denote the space of all continuous functions whose derivatives are of bounded variation on every finite subinterval of $[0,\infty)$. 
One can observe that for each $g\in DBV[0,\infty)$, it admits
\begin{eqnarray}
g(x)=\int\limits_0^x h(s)~ds+g(0),
\end{eqnarray}
where $h$ is a function of bounded variation on each finite sub-interval of $[0,\infty)$. 
Now, define the auxiliary function $g_x$ such that
\begin{eqnarray}\label{eq5}
g_x(t) &= &  
\begin{cases}
    g(t)-g(x-),& 0\leq t<x,\\
    0,& t=x,\\
    g(t)-g(x+), &  x<t<\infty.
\end{cases} 
\end{eqnarray}
Let $V_a^b g$ denotes total variation of a real valued function $g$ on interval $[a,b]\subset[0,\infty)$ with 
\begin{eqnarray}
V_a^b g=\underset{\mathcal{P}}\sup\left(\sum\limits_{k=0}^{n_{P}-1}|g(x_{k+1})-g(x_k)| \right),
\end{eqnarray} 
where, $\mathcal{P}$ is the set of all partitions $P=\{a=x_0,\cdots,x_{n_P}=b\}$ of the interval $[a,b]$. 

\begin{lemma}\label{l3}
For every $x\geq 0$ and sufficiently large of $\beta_n$, there exists a positive constant $M>0$, the following estimates hold:
\begin{enumerate}
\item{} $J_n^{[\alpha]}(x;y)= \int\limits_0^y \mathcal{Y}_n^{[\alpha]}(x;t)~dt\leq \frac{M}{(x-y)^2}\frac{(1+x)^2}{(\beta_n+\psi)},~~0\leq y<x,$
\item{} $1-J_n^{[\alpha]}(x;z)= \int\limits_z^\infty \mathcal{Y}_n^{[\alpha]}(x;t)~dt\leq \frac{M}{(z-x)^2}\frac{(1+x)^2}{(\beta_n+\psi)},~~x<z<\infty.$
\end{enumerate}
\end{lemma}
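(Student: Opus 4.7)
The plan is to bound both tail integrals by comparing them to the second central moment $\Phi_{n,2}^{\alpha}(x) = \mathcal{RL}_{n}^{[\alpha]}((t-x)^2;x)$, which has already been estimated in Remark \ref{re1} of Section \ref{sec3} by $\frac{M(1+x)^2}{\beta_n+\psi}$. This is the standard trick for kernel tail bounds associated with positive linear operators written in integral form.

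For part (1), with $0 \le t \le y < x$, I would use the elementary inequality $(x-t)^2 \ge (x-y)^2$, which gives $\left(\frac{x-t}{x-y}\right)^{2} \ge 1$ on the integration range. Hence
\begin{eqnarray*}
J_n^{[\alpha]}(x,y) = \int\limits_0^y \mathcal{Y}_n^{[\alpha]}(x,t)\,dt \;\le\; \int\limits_0^y \left(\frac{x-t}{x-y}\right)^{2}\mathcal{Y}_n^{[\alpha]}(x,t)\,dt \;\le\; \frac{1}{(x-y)^{2}}\int\limits_0^\infty (t-x)^{2}\mathcal{Y}_n^{[\alpha]}(x,t)\,dt.
\end{eqnarray*}
The last integral is exactly $\mathcal{RL}_{n}^{[\alpha]}((t-x)^2;x) = \Phi_{n,2}^{\alpha}(x)$ by the integral representation (\ref{o4}), and invoking Remark \ref{re1} finishes this half.

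For part (2), with $x < z \le t < \infty$, I would use the symmetric inequality $(t-x)^{2} \ge (z-x)^{2}$ to deduce
\begin{eqnarray*}
1 - J_n^{[\alpha]}(x,z) = \int\limits_z^\infty \mathcal{Y}_n^{[\alpha]}(x,t)\,dt \;\le\; \frac{1}{(z-x)^{2}}\int\limits_z^\infty (t-x)^{2}\mathcal{Y}_n^{[\alpha]}(x,t)\,dt \;\le\; \frac{\Phi_{n,2}^{\alpha}(x)}{(z-x)^{2}},
\end{eqnarray*}
and again Remark \ref{re1} gives the claimed constant $\frac{M(1+x)^2}{\beta_n+\psi}$.

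There is essentially no obstacle here: both estimates reduce in one line to an already-proven moment bound, so the only thing to watch is the bookkeeping of the constant $M$ (it may absorb a factor depending on $\phi,\psi$ but is independent of $n$ and $x$), and the fact that $\int_0^\infty \mathcal{Y}_n^{[\alpha]}(x,t)\,dt = \mathcal{RL}_{n}^{[\alpha]}(1;x) = 1$ from Lemma \ref{l2}, which justifies writing the second estimand as $1 - J_n^{[\alpha]}(x,z)$.
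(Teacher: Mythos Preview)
Your proof is correct and matches the paper's own argument essentially line for line: insert the factor $\left(\frac{x-t}{x-y}\right)^2\ge 1$ (respectively $\left(\frac{t-x}{z-x}\right)^2\ge 1$), enlarge the integration range to $[0,\infty)$, recognize the second central moment, and apply Remark~\ref{re1}. The paper only writes out part (1) and says ``similarly'' for part (2), so your write-up is in fact slightly more complete.
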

\begin{proof}
For $0\leq t\leq y<x$, we have $x-t\geq x-y$, we have
\begin{eqnarray*}
    1\leq \left(\frac{x-t}{x-y}\right)^2. 
\end{eqnarray*}
Therefore,
\begin{eqnarray*}
\int\limits_0^y \mathcal{Y}_n^{[\alpha]}(x;t)~dt &\leq & \int\limits_0^y \left(\frac{x-t}{x-y} \right)^2\mathcal{Y}_n^{[\alpha]}(x;t)~dt\\
&\leq &\frac{1}{(x-y)^2}\mathcal{RL}_{n}^{[\alpha]}((t-x)^2;x)\\
&\leq & \frac{M}{(x-y)^2}\frac{(1+x)^2}{(\beta_n+\psi)}.
\end{eqnarray*}
Similarly, we can prove the second result.
\end{proof}
\begin{theorem}
Let $g\in DBV[0,\infty)$. Then for every $x\in(0,\infty)$, for sufficiently large values of $\beta_n$, the following estimate holds.
\begin{eqnarray*}
|\mathcal{RL}_{n}^{[\alpha]}(g;x)-g(x)|&\leq & \frac{1}{4}|g'(x+)+g'(x-)| \left|\frac{1+2\phi-2x\psi}{(\psi+\beta_n)}\right|+\frac{1}{2}|(g'(x+)-g'(x-))|\sqrt{\frac{M}{(\beta_n+\psi)}}(1+x)\nonumber\\
&& + M\frac{(1+x)^2}{x(\beta_n+\psi)} \sum\limits_{i=1}^{[\sqrt{\beta_n}]}\left(V_{x-\frac{x}{i}}^x g'_x\right)+ \left(V_{x-\frac{x}{\sqrt{\beta_n}}}^x g'_x \right)\frac{x}{\sqrt{\beta_n}}+(V_x^{x+\frac{x}{\sqrt{\beta_n}}} g'_x) \frac{x}{\sqrt{\beta_n}}\\
&& + \frac{M(1+x)^2}{x(\beta_n+\psi)} \sum\limits_{v=1}^{[\sqrt{\beta_n}]}(V_x^{x+\frac{x}{v}} g'_x).
\end{eqnarray*}
\end{theorem}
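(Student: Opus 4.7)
The plan is to use the integral representation of $\mathcal{RL}_{n}^{[\alpha]}$ given in (\ref{o4}), namely
$$\mathcal{RL}_{n}^{[\alpha]}(f;x)-f(x)=\int\limits_0^\infty \mathcal{Y}_n^{[\alpha]}(x,t)\,(f(t)-f(x))\,dt,$$
and then exploit the fact that $f \in DBV[0,\infty)$ admits the representation $f(t)-f(x) = \int_x^t f'(u)\,du$ for $t\neq x$. The key device is the classical decomposition of $f'$ into a symmetric smooth part, a jump (sign) part, and the auxiliary function $f'_x$ defined in (\ref{eq5}):
\begin{eqnarray*}
f'(u) &=& \tfrac{1}{2}(f'(x+)+f'(x-)) + \tfrac{1}{2}(f'(x+)-f'(x-))\,\mathrm{sgn}(u-x) \\
&& {} + f'_x(u) + \Bigl(f'(x)-\tfrac{1}{2}(f'(x+)+f'(x-))\Bigr)\chi_{\{x\}}(u).
\end{eqnarray*}
Integrating from $x$ to $t$ gives $f(t)-f(x) = \tfrac{1}{2}(f'(x+)+f'(x-))(t-x) + \tfrac{1}{2}(f'(x+)-f'(x-))|t-x| + \int_x^t f'_x(u)\,du$, since the characteristic function piece contributes zero.

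Applying $\mathcal{RL}_{n}^{[\alpha]}(\cdot;x)$ to each piece separately produces three contributions. The first piece yields $\tfrac{1}{2}(f'(x+)+f'(x-))\,\Phi_{n,1}^{\alpha}(x)$, which gives the term involving $\frac{1+2\phi-2x\psi}{(\psi+\beta_n)}$ via Lemma \ref{lm3}. The second piece yields $\tfrac{1}{2}(f'(x+)-f'(x-))\,\mathcal{RL}_{n}^{[\alpha]}(|t-x|;x)$; applying the Cauchy--Schwarz inequality together with Remark \ref{re1} bounds $\mathcal{RL}_{n}^{[\alpha]}(|t-x|;x) \leq \sqrt{M(1+x)^2/(\beta_n+\psi)}$, matching the second term of the bound. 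It remains to handle the residual
$$I_n(x) := \int\limits_0^\infty \mathcal{Y}_n^{[\alpha]}(x,t)\,\Bigl(\int\limits_x^t f'_x(u)\,du\Bigr)\,dt,$$
which supplies the sum-of-variations terms.

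For $I_n(x)$ the plan is to split the outer domain $[0,\infty)$ into the three regions $[0, x-x/\sqrt{\beta_n}]$, $[x-x/\sqrt{\beta_n},\, x+x/\sqrt{\beta_n}]$, and $[x+x/\sqrt{\beta_n},\infty)$. On the middle region, since $f'_x(x)=0$ and $f'_x$ has bounded variation, one bounds the inner integral by $|t-x|\cdot V_{x-x/\sqrt{\beta_n}}^{x+x/\sqrt{\beta_n}} f'_x$ and distributes the variation across the two one-sided pieces, producing the $\frac{x}{\sqrt{\beta_n}}\,V_{x-x/\sqrt{\beta_n}}^{x} f'_x$ and $\frac{x}{\sqrt{\beta_n}}\,V_{x}^{x+x/\sqrt{\beta_n}} f'_x$ terms. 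On the left tail, one uses integration-by-parts (Lebesgue--Stieltjes) against $J_n^{[\alpha]}(x,\cdot)$ combined with Lemma \ref{l3}(1), and on the right tail one applies $1-J_n^{[\alpha]}(x,\cdot)$ with Lemma \ref{l3}(2); each tail bound is of order $(1+x)^2/(\beta_n+\psi)$ times an integral of $V_{\cdot}^{x} f'_x$ (resp.\ $V_{x}^{\cdot} f'_x$) against $1/(x-u)^2$ (resp.\ $1/(u-x)^2$).

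Finally, the two tail integrals are estimated by the standard change of variable $u \mapsto x - x/l$ (left) and $u \mapsto x + x/r$ (right), which converts the integrals into Riemann sums over the discrete index $l,r = 1,\ldots,[\sqrt{\beta_n}]$; bounding these sums above by $\tfrac{1}{x}\sum_{l=1}^{[\sqrt{\beta_n}]} V_{x-x/l}^{x} f'_x$ and $\tfrac{1}{x}\sum_{r=1}^{[\sqrt{\beta_n}]} V_{x}^{x+x/r} f'_x$ yields exactly the last two terms of the stated inequality. The main obstacle is executing this last bookkeeping step cleanly: one must carefully pair the tail weight estimate from Lemma \ref{l3} with the substitution so that the $(1+x)^2/(x(\beta_n+\psi))$ prefactor emerges correctly and the variation sums accumulate in the prescribed telescoping manner; monotonicity of the variation and the fact that $J_n^{[\alpha]}$ is a distribution function in $t$ are essential here.
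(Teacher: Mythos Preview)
Your proposal is correct and follows essentially the same approach as the paper's proof: the same Bojanic--Cheng decomposition of $f'$, the same use of Lemma~\ref{lm3} and Cauchy--Schwarz for the first two pieces, and the same tail estimate via Lemma~\ref{l3} followed by the substitutions $u=x-x/l$ and $u=x+x/r$. The only organizational difference is that the paper first splits $I_n(x)$ at $t=x$ into $I_1+I_2$ and integrates by parts over the full half-lines $[0,x]$ and $[x,\infty)$ (so the boundary terms vanish) \emph{before} subdividing at $x\pm x/\sqrt{\beta_n}$, whereas you split directly into three regions; if you carry out your version literally, the integration by parts on the truncated tails leaves a boundary term at $t=x\pm x/\sqrt{\beta_n}$ of size at most $\frac{x}{\sqrt{\beta_n}}V_{x-x/\sqrt{\beta_n}}^{x}f'_x$ (resp.\ the right-hand analogue), which is harmlessly absorbed into your middle-region estimate, so the two arrangements are equivalent.
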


\begin{proof}
Using the hypothesis (\ref{eq5}), one can write as:
\begin{eqnarray}\label{bv1}
g'(s)&=& g_x'(s)+\frac{1}{2}(g'(x+)+g'(x-))+\frac{1}{2}(g'(x+)-g'(x-))\operatorname{sgn}(s-x)(s-x)\nonumber\\
&&+\tau_x(s)\{g'(s)-\frac{1}{2}(g'(x+)+g'(x-)) \},
\end{eqnarray}
where $\tau_x(s)$
\begin{eqnarray}\label{bv2}
\tau_x(s)=
\begin{cases}
1, & s=x\\
0, & s\neq x.
\end{cases}
\end{eqnarray}
By equation (\ref{o4}), we can write as
\begin{eqnarray}\label{bv3}
\mathcal{RL}_{n}^{[\alpha]}(g;x)-g(x)&=& \int\limits_0^\infty \mathcal{Y}_n^{[\alpha]}(x;s)(g(s)-g(x))~ds\nonumber\\
&=&  \int\limits_0^\infty \mathcal{Y}_n^{[\alpha]}(x;s)\left(\int\limits_x^s g'(t)~dt\right)ds.
\end{eqnarray}
Here, it is clear that
\begin{eqnarray*}
\int\limits_x^s \tau_x(t)~dt=0.
\end{eqnarray*}
Therefore
\begin{eqnarray}\label{bv4}
\int\limits_0^\infty \mathcal{Y}_n^{[\alpha]}(x;s)\int\limits_x^s\left(\tau_x(t)\{g'(t)-\frac{1}{2}(g'(x+)+g'(x-))\}dt\right)ds=0.
\end{eqnarray}
Also by (\ref{o4}), one can obtains
\begin{eqnarray}\label{bv5}
\int\limits_0^\infty \mathcal{Y}_n^{[\alpha]}(x;s)\left(\int\limits_x^s\frac{1}{2}(g'(x+)+g'(x-))~dt\right)ds &=&\frac{1}{2}(g'(x+)+g'(x-)) \int\limits_0^\infty \mathcal{Y}_n^{[\alpha]}(x;s)(s-x)~ds\nonumber\\&=& \frac{1}{2}(g'(x+)+g'(x-)) \mathcal{RL}_{n}^{[\alpha]}((s-x);x).
\end{eqnarray}
Moreover, 
\begin{eqnarray}\label{bv6}
\left|\int\limits_0^\infty \mathcal{Y}_n^{[\alpha]}(x;s)\left(\frac{1}{2}\int\limits_x^s(g'(x+)-g'(x-))\text{sgn}(t-x)~dt\right)ds\right| &\leq & \frac{1}{2}|(g'(x+)-g'(x-))|\int\limits_0^\infty \mathcal{Y}_n^{[\alpha]}(x;s)|s-x|~ds\nonumber\\
&\leq & \frac{1}{2}|(g'(x+)-g'(x-))| \mathcal{RL}_{n}^{[\alpha]}(|s-x|;x)\nonumber\\
&\leq & \frac{1}{2}|(g'(x+)-g'(x-))|\left(\Phi_{n,2}^{\alpha}(x) \right)^{\frac{1}{2}}.
\end{eqnarray}
By Lemma \ref{l3} and Remark \ref{remark3}. Then by (\ref{bv1}), we obtain
\begin{eqnarray}\label{bv7}
|\mathcal{RL}_{n}^{[\alpha]}(g;x)-g(x)|&\leq & \frac{1}{2}|g'(x+)+g'(x-)| |\Phi_{n,1}^{\alpha}(x)|+\frac{1}{2}|(g'(x+)-g'(x-))|\left(\Phi_{n,2}^{\alpha}(x) \right)^{\frac{1}{2}}\nonumber\\
&& + \left| \int\limits_0^\infty \mathcal{Y}_n^{[\alpha]}(x;s)\left( \int\limits_x^s g_x'(t)~dt\right)ds\right|.
\end{eqnarray} 
\begin{eqnarray}\label{bv8}
\int\limits_0^\infty \mathcal{Y}_n^{[\alpha]}(x;s)\left( \int\limits_x^s g_x'(t)~dt\right)ds &=& \int\limits_0^x \mathcal{Y}_n^{[\alpha]}(x;s)\left( \int\limits_x^s g_x'(t)~dt\right)ds+\int\limits_x^\infty \mathcal{Y}_n^{[\alpha]}(x;s)\left( \int\limits_x^s g_x'(t)~dt\right)ds\nonumber\\
&=& I_1+I_2~(\text{say}),
\end{eqnarray}
where $I_1=\int\limits_0^x \mathcal{Y}_n^{[\alpha]}(x;s)\left( \int\limits_x^s g_x'(t)~dt\right)ds$, $I_2=\int\limits_x^\infty \mathcal{Y}_n^{[\alpha]}(x;s)\left( \int\limits_x^s g_x'(t)~dt\right)ds$.
So, 
\begin{eqnarray*}
I_1&=&\int\limits_0^x\left( \int\limits_x^s g_x'(t)~dt\right)\mathcal{Y}_n^{[\alpha]}(x;s)~ds\\
&=& \int\limits_0^x\left( \int\limits_x^s g_x'(t)~dt\right)\frac{\partial}{\partial s} J_n^{[\alpha]}(x;s)ds\\
&=& \int\limits_0^x g_x'(s)J_n^{[\alpha]}(x;s)ds\\
&=& \left(\int\limits_0^y+\int\limits_y^x\right)g_x'(s)J_n^{[\alpha]}(x,s)ds.
\end{eqnarray*}
For sufficiently large $\beta_n$, with $\beta_n\geq 1$, we set $y=x-\frac{x}{\sqrt{\beta_n}}$,  we have 
\begin{eqnarray*}
\left|\int\limits_{x-\frac{x}{\sqrt{\beta_n}}}^x g_x'(s)J_n^{[\alpha]}(x,s)ds\right|&\leq &\int\limits_{x-\frac{x}{\sqrt{\beta_n}}}^x |g_x'(s)|J_n^{[\alpha]}(x,s)ds\\
&\leq & \int\limits_{x-\frac{x}{\sqrt{\beta_n}}}^x |g_x'(s)-g_x'(x)|ds,~~\text{as}~g_x'(x)=0,~\text{and}~|J_n^{[\alpha]}(x,s)|\leq 1\\
&\leq & \int\limits_{x-\frac{x}{\sqrt{\beta_n}}}^x V_s^x g'_x~ ds \leq   V_{x-\frac{x}{\sqrt{\beta_n}}}^x g'_x\int\limits_{x-\frac{x}{\sqrt{\beta_n}}}^x ds=\left(V_{x-\frac{x}{\sqrt{\beta_n}}}^x g'_x \right)\frac{x}{\sqrt{\beta_n}}.
\end{eqnarray*}
By Lemma \ref{l3} as well as taking $l=\frac{x}{x-s}$, it hold
\begin{eqnarray*}
\int\limits_0^{x-\frac{x}{\sqrt{\beta_n}}}|g_x'(s)|J_n^{[\alpha]}(x,s)ds &\leq & M\frac{(1+x)^2}{(\beta_n+\psi)}\int\limits_0^{x-\frac{x}{\sqrt{\beta_n}}}\frac{|g'_x(s)|}{(x-s)^2}~ds\\
&\leq & M\frac{(1+x)^2}{(\beta_n+\psi)}\int\limits_0^{x-\frac{x}{\sqrt{\beta_n}}} \left(V_s^x g'_x\right)\frac{1}{(x-s)^2}~ds\\
&=& M\frac{(1+x)^2}{x(\beta_n+\psi)}\int\limits_1^{\sqrt{\beta_n}}\left(V_{x-\frac{x}{l}}^x g'_x\right)~dl\leq M\frac{(1+x)^2}{x(\beta_n+\psi)} \sum\limits_{i=1}^{[\sqrt{\beta_n}]}\left(V_{x-\frac{x}{i}}^x g'_x\right).
\end{eqnarray*}
Thus, 
\begin{eqnarray}
|I_1|\leq M\frac{(1+x)^2}{x(\beta_n+\psi)} \sum\limits_{i=1}^{[\sqrt{\beta_n}]}\left(V_{x-\frac{x}{i}}^x g'_x\right)+ \left(V_{x-\frac{x}{\sqrt{\beta_n}}}^x g'_x \right)\frac{x}{\sqrt{\beta_n}}
\end{eqnarray}
Now, $I_2$ can be written as given below, and integrating by parts
\begin{eqnarray*}
|I_2|&=& \left|\int\limits_x^{z} \left(\int\limits_x^s g'_x(t)~dt \right)\frac{\partial}{\partial s}(1-J_n^{[\alpha]}(x,s))ds+\int\limits_z^{\infty} \left(\int\limits_x^s g'_x(t)~dt \right)\frac{\partial}{\partial s}(1-J_n^{[\alpha]}(x,s))ds\right|\\
&=& \Bigg|\int\limits_x^{z}g'_x(t)\cdot (1-J_n^{[\alpha]}(x,z))~dt-\int\limits_x^z g'_x(s)(1-J_n^{[\alpha]}(x,s))~ds +\Bigg[\int\limits_x^s g'_x(t)~dt~(1-J_n^{[\alpha]}(x,s))\Bigg]_z^{\infty}\\
&&-\int\limits_z^{\infty}g'_x(s)(1-J_n^{[\alpha]}(x,s))~ds\Bigg|\\
&\leq & \Bigg|\int\limits_x^z g'_x(s)(1-J_n^{[\alpha]}(x,s))~ds \Bigg|+\Bigg| \int\limits_z^{\infty}g'_x(s)(1-J_n^{[\alpha]}(x,s))~ds\Bigg|.
\end{eqnarray*}
Applying Lemma \ref{l3}, and taking $z=x+\frac{x}{\sqrt{\beta_n}}$, we get
\begin{eqnarray*}
|I_2|&\leq & \frac{M(1+x)^2}{(\beta_n+\psi)}\int\limits_z^{\infty} (V_x^s g'_x)~\frac{ds}{(s-x)^2}+\int\limits_x^z(V_x^s g'_x)~ds\\
&\leq & \frac{M(1+x)^2}{(\beta_n+\psi)}\int\limits_{x+\frac{x}{\sqrt{\beta_n}}}^{\infty} (V_x^s g'_x)~\frac{ds}{(s-x)^2}+\int\limits_x^{x+\frac{x}{\sqrt{\beta_n}}}(V_x^{x+\frac{x}{\sqrt{\beta_n}}} g'_x)~ds\\
&=& \frac{M(1+x)^2}{(\beta_n+\psi)}\int\limits_{x+\frac{x}{\sqrt{\beta_n}}}^{\infty} (V_x^s g'_x)~\frac{ds}{(s-x)^2}+ (V_x^{x+\frac{x}{\sqrt{\beta_n}}} g'_x) \frac{x}{\sqrt{\beta_n}}.
\end{eqnarray*}
If we consider, $s=x+\frac{x}{r}$, then we get
\begin{eqnarray*}
\int\limits_{x+\frac{x}{\sqrt{\beta_n}}}^{\infty} (V_x^s g'_x)~\frac{ds}{(s-x)^2}&=& \int\limits_0^{\sqrt{\beta_n}}(V_x^{x+\frac{x}{r}} g'_x)\frac{dr}{x}\\
&\leq & \frac{1}{x}\sum\limits_{v=1}^{[\sqrt{\beta_n}]}\int\limits_v^{v+1}(V_x^{x+\frac{x}{r}} g'_x)~dr\\
&\leq & \frac{1}{x}\sum\limits_{v=1}^{[\sqrt{\beta_n}]}(V_x^{x+\frac{x}{v}} g'_x).
\end{eqnarray*}
So,
\begin{eqnarray*}
|I_2| \leq  (V_x^s g'_x) \frac{x}{\sqrt{\beta_n}} + \frac{M(1+x)^2}{x(\beta_n+\psi)} \sum\limits_{v=1}^{[\sqrt{\beta_n}]}(V_x^{x+\frac{x}{v}} g'_x).
\end{eqnarray*}
Injecting the value of $I_1$ and $I_2$ in equation (\ref{bv8}), and using (\ref{bv8}), we get our required result.
\begin{eqnarray*}
|\mathcal{RL}_{n}^{[\alpha]}(g;x)-f(x)|&\leq & \frac{1}{4}|g'(x+)+g'(x-)| \left|\frac{1+2\phi-2x\psi}{(\psi+\beta_n)}\right|+\frac{1}{2}|(g'(x+)-g'(x-))|\sqrt{\frac{M}{(\beta_n+\psi)}}(1+x)\nonumber\\
&& + M\frac{(1+x)^2}{x(\beta_n+\psi)} \sum\limits_{i=1}^{[\sqrt{\beta_n}]}\left(V_{x-\frac{x}{i}}^x g'_x\right)+ \left(V_{x-\frac{x}{\sqrt{\beta_n}}}^x g'_x \right)\frac{x}{\sqrt{\beta_n}}+(V_x^{x+\frac{x}{\sqrt{\beta_n}}} g'_x) \frac{x}{\sqrt{\beta_n}}\\
&& + \frac{M(1+x)^2}{x(\beta_n+\psi)} \sum\limits_{v=1}^{[\sqrt{\beta_n}]}(V_x^{x+\frac{x}{v}} g'_x).
\end{eqnarray*}
Thus, the proof is completed.
\end{proof}

\section{Tabular and graphical representation}\label{sec8}

This section illustrates the convergence behaviour of the proposed operators (\ref{o3}) through graphical and numerical examples. A comparative study with the operators defined in (\ref{o2}) is also presented.
\begin{example}
Consider the function $g(x)=\sin{x}$. Let $\alpha=\frac{1}{50}$, $\phi=0.1$ and $~\psi=0.3$. Figure \ref{F1} shows the approximations obtained by the proposed operators $\mathcal{RL}_{n}^{[\alpha]}$ defined in (\ref{o3}) for $\beta_n=n=5, 10, 20, 40$, respectively. The curves corresponding to these values are shown in magenta, red, green, and cyan, respectively. It is evident that the approximating curves approach the graph of $g(x)=\sin{x}$ as $n$ increases, confirming the convergence of the proposed operators. 

\begin{figure}[h!]
    \centering 
    \includegraphics[width=.52\textwidth]{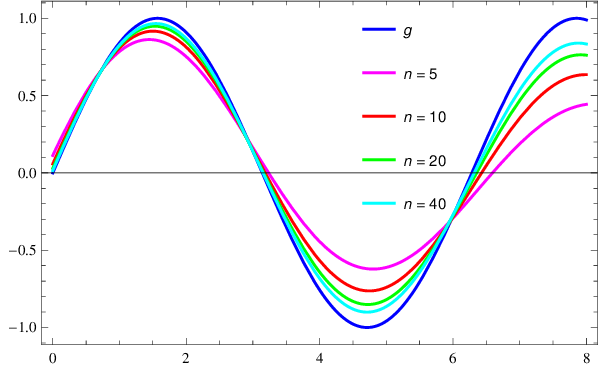}   
    \caption[Description in LOF, taken from~\cite{source}]{Graphical illustration of the convergence of the proposed operators  $\mathcal{RL}_{n}^{[\alpha]}(g;x)$  to the function $g(x)=\sin{x}$ for $\alpha=\frac{1}{50}$, $\phi=0.1$ and $~\psi=0.3$.}
    \label{F1}
\end{figure}
\end{example}

\begin{example}
Consider the function $g(x)=e^x$ and $\alpha=\frac{1}{50}$, $\phi=0.1,~\psi=0.3$. Figure \ref{F2} compares the proposed operators $\mathcal{RL}_{n}^{[\alpha]}(g;x)$ with the operators $\mathcal{L}_{n}^{[\alpha]}(g;x)$ defined in (\ref{o2}).
Throughout this example, we take $\beta_n=n$, where $n=1,2,3,4,5,6,7,8,9,10$. The graphs of $\mathcal{RL}_{n}^{[\alpha]}(g;x)$ and $\mathcal{L}_{n}^{[\alpha]}(g;x)$. Blue color and red color represent the operators $\mathcal{RL}_{n}^{[\alpha]}(g;x)$ and $\mathcal{L}_{n}^{[\alpha]}(g;x)$ are shown in blue and red, respectively.
\begin{figure}[h!]
    \centering 
    \includegraphics[width=.52\textwidth]{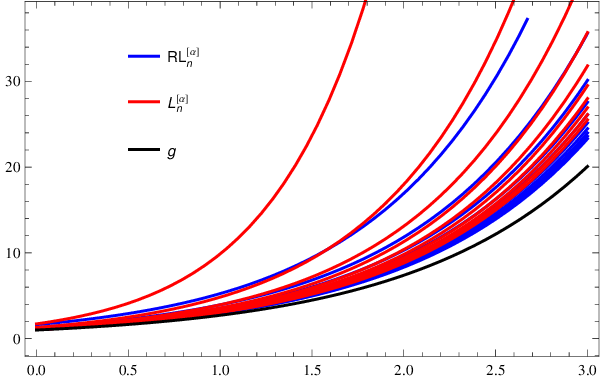}   
    \caption[Description in LOF, taken from~\cite{source}]{Comparison of convergence behaviour of $\mathcal{RL}_{n}^{[\alpha]}(g;x)$ and $\mathcal{L}_{n}^{[\alpha]}(g;x)$ towards $g(x)$.}
    \label{F2}
\end{figure}
\end{example}
Figure \ref{F2} indicates that, for the chosen parameters $\phi=0.1,~\psi=0.3$ the proposed operators $\mathcal{RL}_{n}^{[\alpha]}(g;x)$ provide a closer approximation to $g(x)=e^x$ than the operator $\mathcal{L}_{n}^{[\alpha]}(g;x)$ for the values of $n$ considered. In particular, the graphs of $\mathcal{RL}_{n}^{[\alpha]}(g;x)$ approach the graph of $g(x)$ more rapidly than those of $\mathcal{L}_{n}^{[\alpha]}(g;x)$. This numerical example illustrates the influence of the Stancu parameters on the approximation behaviour of the proposed operators.\\

To further illustrate the effect of the sequence $\beta_n$, we choose $\beta_n=n^2$ while keeping $\phi=0.1,~\psi=0.3$ and $\alpha=0.02$. Figure \ref{Fig2} compares the proposed operators $\mathcal{RL}_{n}^{[\alpha]}(g;x)$  with the operators $\mathcal{L}_{n}^{[\alpha]}(g;x)$ for the function $g(x)=e^x$. 

\begin{figure}[h!]
    \centering 
    \includegraphics[width=.52\textwidth]{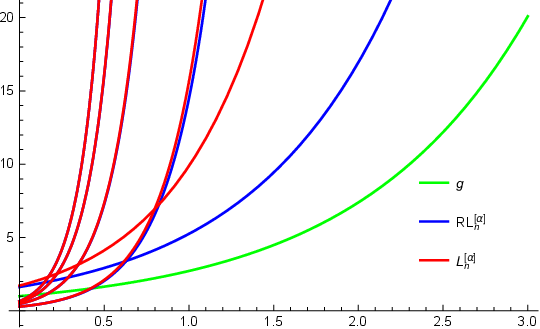}   
    \caption[Description in LOF, taken from~\cite{source}]{Comparison of  operators $\mathcal{RL}_{n}^{[\alpha]}(g;x)$ with operators $\mathcal{L}_{n}^{[\alpha]}(g;x)$.}
    \label{Fig2}
\end{figure}
The blue and red curves represent the operators $\mathcal{RL}_{n}^{[\alpha]}(g;x)$ and $\mathcal{L}_{n}^{[\alpha]}(g;x)$ respectively, while the black curve denotes the function $g(x)=e^x$. For the values of $n$ considered, the proposed operators $\mathcal{RL}_{n}^{[\alpha]}(g;x)$   remain closer to the graph of $g(x)$ than the operators $\mathcal{L}_{n}^{[\alpha]}(g;x)$ indicating an improved approximation for the selected parameters and sequence $\beta_n=n^2$. 
 

\begin{example}
Consider the function $g(x)=e^x$, as in the previous example. We compute the absolute approximation errors of the operators (\ref{o2}) and (\ref{o3}) at several points for different values of $\beta_n=n$ Throughout this example, we choose $\phi=0.1, \psi=0.9$. The corresponding numerical errors are reported in Tables \ref{t1} and \ref{t2}.

\end{example}

\begin{table}[ht]
\centering
\begin{tabular}{|c|c|c|c|c|c|c|}
\hline 
$x\downarrow$, $n\to$ & 5 & 10 & 20 & 30 & 40 & 50\\ 
\hline 
0.1 & 1.11666 & 1.06464 & 1.03632 & 1.02651 & 1.02153 & 1.01852\\ 
\hline 
0.5 & 1.26644 & 1.21685 & 1.18838 & 1.17828 & 1.17311 & 1.16997\\ 
\hline 
0.9 & 1.66465 & 1.63087 & 1.60865 & 1.60032 & 1.59597 & 1.5933\\ 
\hline 
1.0 & 1.81521 & 1.78871 & 1.7697 & 1.76235 & 1.75847& 1.75608 \\ 
\hline 
\end{tabular} 
\caption{Absolute approximation errors of $\mathcal{RL}_{n}^{[\alpha]}(g;x)$ in approximating $g(x)=e^x$
}\label{t1}
\end{table}
\begin{table}[ht]
\centering
\begin{tabular}{|c|c|c|c|c|c|c|}
\hline 
$x\downarrow$, $n\to$ & 5 & 10 & 20 & 30 & 40 & 50\\
\hline 
0.1 & 1.13814 & 1.06966 & 1.03736 & 1.02687 & 1.02168 & 1.01858 \\ 
\hline 
0.5 & 1.43749 & 1.2894 & 1.22141 & 1.19958 & 1.18881 & 1.1824 \\ 
\hline 
0.9 & 2.13185 & 1.83752 & 1.70538 & 1.66335 & 1.6427 & 1.63042 \\ 
\hline 
1.0 & 2.39098 & 2.04452 & 1.8898 & 1.84069 & 1.81658 & 1.80226 \\ 
\hline 
\end{tabular} 
\caption{Absolute approximation errors of $\mathcal{L}_{n}^{[\alpha]}(g;x)$ in approximating function $g(x)=e^x$}\label{t2}
\end{table}
\textbf{Concluding Remark:} Tables \ref{t1} and \ref{t2} show that the approximation errors decrease as $n$ increases, indicating the convergence of both sequences of operators. Furthermore, for all values of $x$ and $n$ considered in the tables, the proposed operators $\mathcal{RL}_{n}^{[\alpha]}(g;x)$ produce smaller approximation errors than the operators $\mathcal{L}_{n}^{[\alpha]}(g;x)$. These numerical results demonstrate that, for the chosen values of the Stancu parameters $\phi$ and $\psi$ the proposed operators provide a more accurate approximation of $g(x)=e^x$. The numerical experiments suggest that an appropriate choice of the parameters $\phi$ and $\psi$ improves the approximation performance of the proposed operators. 

\subsection{\textbf{Effect of $\alpha$ on the approximation}}
In this subsection, we investigate the influence of the parameter $\alpha$ on the approximation behaviour of the operators (\ref{o2}) and (\ref{o3}). A numerical comparison is presented in Tables \ref{t3} and \ref{t4}. Throughout this example, we take $x=0.1$, $\phi=0.1$ and $\psi=0.9$.




\begin{table}[ht]
\centering
\begin{tabular}{|c|c|c|c|c|c|c|c|c|c|c|}
\hline 
$n\downarrow$, $\alpha\to$ & $\frac{1}{5}$ & $\frac{1}{10}$ & $\frac{1}{20}$ & $\frac{1}{30}$ & $\frac{1}{50}$ & $\frac{1}{100}$ & $\frac{1}{150}$ & $\frac{1}{200}$ & $\frac{1}{250}$ & $\frac{1}{500}$\\ 
\hline 
5 & 1.12752 & 1.12115 & 1.11828 & 1.11737 & 1.11666 & 1.11613 & 1.11595 & 1.11587 & 1.11581 & 1.11571 \\ 
\hline 
10 & - & 1.06931 & 1.06633 & 1.06539 & 1.06464 & 1.0641 & 1.06391 & 1.06382 & 1.06377 & 1.06366 \\ 
\hline 
15 & - & - & 1.04765 & 1.0467 & 1.04595 & 1.04539 & 1.04521 & 1.04512 & 1.04506 & 1.04495 \\ 
\hline 
20 & - & - & 1.03804 & 1.03708 & 1.03632 & 1.03577 & 1.03558 & 1.03549 & 1.03544 & 1.03533 \\ 
\hline 
\end{tabular} 
\caption{Effect of $\alpha$ on the approximation by the operators $\mathcal{RL}_{n}^{[\alpha]}(g;x)$}\label{t3}
\end{table}
\begin{table}[ht]
\centering
\begin{tabular}{|c|c|c|c|c|c|c|c|c|c|c|}
\hline 
$n\downarrow$, $\alpha\to$ & $\frac{1}{5}$ & $\frac{1}{10}$ & $\frac{1}{20}$ & $\frac{1}{30}$ & $\frac{1}{50}$ & $\frac{1}{100}$ & $\frac{1}{150}$ & $\frac{1}{200}$ & $\frac{1}{250}$ & $\frac{1}{500}$\\  
\hline 
5 & 1.15457 & 1.14482 & 1.14054 & 1.13919 & 1.13814 & 1.13737 & 1.13711 & 1.13698 & 1.13691 & 1.13675 \\ 
\hline 
10 & - & 1.07532 & 1.0717 & 1.07055 & 1.06966 & 1.069 & 1.06878 & 1.06867 & 1.0686 & 1.06847 \\ 
\hline 
15 & - & - & 1.04992 & 1.04884 & 1.04799 & 1.04736 & 1.04716 & 1.04705 & 1.04699 & 1.04687 \\ 
\hline 
20 & - & - & 1.04687 & 1.03819 & 1.03736 & 1.03675 & 1.03655 & 1.03645 & 1.03639 & 1.03627 \\ 
\hline 
\end{tabular} 
\caption{Effect of $\alpha$ on the approximation by  the operators $\mathcal{L}_{n}^{[\alpha]}(g;x)$}\label{t4}
\end{table}
To further investigate the influence of $\alpha$, we choose $\beta_n=n$ and consider several admissible choices of $\alpha$, namely
\begin{eqnarray*}
    \alpha=\frac{1}{n},~\frac{1}{n+1},~\frac{1}{(n^2-n+1)},~\frac{1}{n^2},~\frac{1}{(n^2+n+1)},~\frac{1}{(n+1)^2}.
\end{eqnarray*}
Table \ref{t5} reports the corresponding absolute approximation errors of the proposed operators (\ref{o3}) for the function $g(x)e^x$ at the point $x=0.5$. 
\begin{table}[ht]
\centering
\begin{tabular}{|c|c|c|c|c|c|c|}
\hline 
$n\downarrow$, $\alpha\to$  & $\frac{1}{n}$ & $\frac{1}{n+1}$ & $\frac{1}{n^2-n+1}$ &  $\frac{1}{n^2+\frac{1}{2}}$ & $\frac{1}{n^2+n+1}$ & $\frac{1}{(n+1)^2}$  \\ 
\hline 
15 & 0.488391 & 0.483345 & 0.418235 &  0.417913 & 0.417613 & 0.417357  \\ 
\hline 
25 & 0.422315 & 0.420564 & 0.380506 &  0.380438 & 0.380372 & 0.380313  \\ 
\hline 
30 & 0.405894 & 0.404689 & 0.371123 &  0.371084 & 0.371046 & 0.371012  \\ 
\hline 
50 & 0.373167 & 0.372742 & 0.352398 &  0.352389 & 0.352381 & 0.352373  \\ 
\hline 
100 & 0.348721 & 0.348617 & 0.338375 & 0.338374 & 0.338373 & 0.338372  \\ 
\hline 
200 &  0.33653 & 0.336505 & 0.331368 &  0.331368 & 0.331367 & 0.331367  \\ 
\hline 
\end{tabular} 
\caption{
Absolute approximation errors of the proposed operators $\mathcal{RL}_{n}^{[\alpha]}(g;x)$ for different choices of the parameter $\alpha$}\label{t5}
\end{table}

Tables \ref{t3}--\ref{t5} illustrate the impact of the parameter $\alpha$ on the approximation behaviour of the proposed operators. The numerical results indicate that,  the approximation error decreases as smaller values of $\alpha$ are chosen. Moreover, the proposed Stancu-type operators consistently produce smaller numerical errors than the corresponding operators (\ref{o2}) for the selected values of the parameters. These observations suggest that an appropriate choice of $\alpha$, together with the Stancu parameters $\phi$ and $\psi$, can improve the approximation performance of the proposed operators.

\section{Conclusion and result Discussion}\label{sec9}

In this paper, we introduced a modified Stancu variant of the Sz$\acute{\text{a}}$sz-Mirakjan-Kantorovich operators involving the additional parameters $\phi$, $\psi$, and the sequence ${\beta_n}$. We investigated their approximation behaviour by establishing local and weighted approximation results, estimates in terms of the modulus of continuity and the second-order modulus of smoothness, Voronovskaya-type, quantitative Voronovskaya-type, and Grüss-Voronovskaya-type theorems, together with convergence results for functions whose derivatives are of bounded variation. Numerical examples and graphical illustrations were presented to support the theoretical analysis and to demonstrate the influence of the parameters on the approximation process.

The proposed family contains several well-known operators as special cases, providing a unified framework for studying their approximation properties. The additional parameters offer greater flexibility in constructing approximation processes and may be useful in developing further generalizations of positive linear operators. Possible directions for future work include extending these operators to multivariate settings, (q)-analogues, statistical approximation, and approximation in more general function spaces.


\begin{thebibliography}{10}
\bibitem{AI} Abel U, Ivan M. On a generalization of an approximation operator defined by A. Lupa\c{s}. General Mathematics. 2007;15(1):21-34.
\bibitem{AGR} Acu AM, Gonska H, Ra\c{s}a I. Gr$\ddot{\text{u}}$ss-type and Ostrowski-type inequalities in approximation theory. Ukrainian Mathematical Journal. 2011 Nov 1;63(6):843-64.

\bibitem{AO2} Agratini O. On a sequence of linear and positive operators. Facta Univ. Ser. Math. Inform. 1999;14:41-8.
\bibitem{AO1} Agratini O. On the rate of convergence of a positive approximation process. Nihonkai Mathematical Journal. 2000;11(1):47-56.
\bibitem{q4} Altomare F, Campiti M. Korovkin-type Approximation Theory and its Application, Walter de Gruyter Studies in Math. 17.
\bibitem{BR} Bojanic R. An estimate of the rate of convergence for Fourier series of functions of bounded variation. Publications de l'Institut Mathématique [Elektronische Ressource]. 1979;40:57-60.
\bibitem{BRV} Bojanic R, Vuilleumier M. On the rate of convergence of Fourier-Legendre series of functions of bounded variation. Journal of Approximation Theory. 1981 Jan 1;31(1):67-79.
\bibitem {BRK} Bojanic R, Khan MK. Rate of convergence of some operators of functions with derivatives of bounded variation. Atti Sem. Mat. Fis. Univ. Modena. 1991;39(2):495-512.
\bibitem{CF1} Cheng F. On the rate of convergence of Bernstein polynomials of functions of bounded variation. Journal of approximation theory. 1983 Nov 1;39(3):259-74.
\bibitem{CF2} Cheng F. On the rate of convergence of the Sz$\acute{\text{a}}$sz-Mirakyan operator for functions of bounded variation. Journal of Approximation Theory. 1984 Mar 1;40(3):226-41.
\bibitem{DM} Dhamija M, Pratap R, Deo N. Approximation by Kantorovich form of modified Sz$\acute{\text{a}}$sz-Mirakyan operators. Applied Mathematics and Computation. 2018 Jan 15;317:109-20.

\bibitem{q1} De Vore RA, Lorentz GG. Constructive approximation. Springer Science and Business Media; Berlin, 1993 Nov 4.
\bibitem{q5} Gadjiev A.D. The convergence problem for a sequence of positive linear operators on unbounded sets and theorems analogous to that P.P. Korovkin, Soviet Math. Dokl., 1974, 15 (5), 1433-1436.
\bibitem{q6} Gadjiev A.D. Theorems of the type of P.P. Korovkin’s theorems, Math. Zametki, 1976, 20 (5), 781-786.(in Russian), Math. Notes, 1976, 20 (5-6), 995-998(Engl. Trans.).

\bibitem{G2} Gadjiev A, Hacısalihoglu H. Convergence of the sequences of linear positive operators. Ankara University. 1995.
\bibitem{GG1} Gal S, Gonska H. Gr\"uss and Gr\"uss-Voronovskaya-type estimates for some Bernstein-type polynomials of real and complex variables. arXiv preprint arXiv:1401.6824. 2014 Jan 27.

\bibitem{GT} Gonska H, Tachev G. GrGr$\ddot{\text{u}}$ss-type inequalities for positive linear operators with second order moduli. Matemati\v{c}ki vesnik. 2011 Jan;63(4):247-52.
\bibitem{GG} Gr$\ddot{\text{u}}$ss G. $\ddot{\text{U}}$ber das Maximum des absoluten Betrages von $\frac{1}{b-a}\int\limits_a^bf(x)g(x)dx-\frac{1}{(b-a)^2}\int\limits_a^bf(x)dx\int\limits_a^bg(x)dx$. (German) Math. Z 39(1), 215-226(1935).
\bibitem{GS} Guo S. On the rate of convergence of the Durrmeyer operator for functions of bounded variation. Journal of Approximation Theory. 1987 Oct 1;51(2):183-92.

\bibitem{IN1} Ispir N. On modified Baskakov operators on weighted spaces. Turkish Journal of Mathematics. 2001 Oct 15;25(3):355-65.
\bibitem{JP} Jain GC, Pethe S. On the generalizations of Bernstein and Sz$\acute{\text{a}}$sz-Mirakjan operators. Nanta Math. 1977;10(2):185-93.
\bibitem{q3} Korovkin PP. Convergence of linear positive operators in the space of continuous functions (russian), Doklady Akad. Nauk. SSSR (N. N.) 90 (1953), 961-964.
\bibitem{LB} Lenze B. On Lipschitz-type maximal functions and their smoothness spaces. InIndagationes Mathematicae (Proceedings) 1988 Jan 1 (Vol. 91, No. 1, pp. 53-63). North-Holland.
\bibitem{LA1} Lupa\c{s} A. The approximation by means of some linear positive operators, in: M.W. Muller, M. Felten, D.H. Mache (Eds.), Approximation Theory (Proceedings of the International Dortmund Meeting IDoMAT 95, held in Witten, Germany, March 13-17, (1995), Akademie Verlag, Berlin, 1995, pp. 201-229 . (Mathematical research, Vol. 86).
\bibitem{MAH1} $\ddot{\text{O}}$zarslan MA, Actu$\check{\text{g}}$lu H. Local approximation properties for certain King type operators. Filomat 27 (2013), no. 1, 173-181. 
\bibitem{SLL} Shaw SY, Liaw WC, Lin YL. Rates for approximation of functions in $BV [a, b]$ and $DBV [a, b]$ by positive linear operators. Chinese Journal of Mathematics. 1993 Jun 1:171-93.
\bibitem{DD1} Stancu DD. Asupra unei generalizari a polinoamelor lui Bernstein. Studia Universitatis Babes-Bolyai. 1969;14(2):31-45.
\bibitem{SO} Sz$\acute{\text{a}}$sz O. Generalization of S. Bernstein's polynomials to the infinite interval. J. Res. Nat. Bur. Standards. 1950 Sep;45:239-45.
\bibitem{TV1} Totik V. Approximation by Sz$\acute{\text{a}}$sz-Mirakjan-Kantorovich operators in $L_p$ $(p> 1)$. Analysis Mathematica. 1983 Jun 1;9(2):147-67.
\bibitem {ZDX1} Zhou DX. Weighted approximation by Sz$\acute{\text{a}}$sz-Mirakyan operators. Journal of Approximation Theory. 1994 Mar 1;76(3):393-402.



\end{thebibliography}
\end{document}